\theoremstyle{definition}
\newtheorem{theorem}{Theorem}[section]
\newtheorem{corollary}[theorem]{Corollary} 
\newtheorem{definition}[theorem]{Definition} 
 \newtheorem{lemma}[theorem]{Lemma}
\newtheorem{proposition}[theorem]{Proposition}
 \newcommand{\R}{\mathbb{R}} \newcommand{\C}{\mathbb{C}}
\newcommand{\Z}{\mathbb{Z}} \newcommand{\N}{\mathbb{N}}
\newcommand{\supp}{\operatorname{supp}} \newcommand{\dist}{\operatorname{dist}}
\newcommand{\diam}{\operatorname{diam}} 
 \newcommand{\Iso}{\operatorname{Iso}}
\title{Extremizers for Adjoint Restriction to a Pair of Reflected Paraboloids}
\author{James Tautges}
\begin{document}
	
\maketitle
	
\begin{abstract}
    We consider the adjoint restriction inequality associated to the hypersurface
    $\{(\tau, \xi) : \tau = \pm|\xi|^2, \;\xi \in \R^d\}$ at the Stein-Tomas exponent. Extremizers
    exist in all dimensions and extremizing sequences are precompact modulo symmetries conditional
    on a certain inequality, which we verify in the case $d \in \{1,2\}$.
\end{abstract}
	
\section{Introduction}

Fix $d \in \N$ and define
\begin{equation}
    \mathcal{E} f(t,x) = \int e^{i (t,x)(|\xi|^2, \xi)} f(\xi) d\xi.
\end{equation}
The adjoint Fourier restriction conjecture for the paraboloid is that
\begin{equation}\label{first sharp const statement}
    \sup_{f \in L^p} \frac{\|\mathcal{E}f\|_q}{\|f\|_p} = A_p < \infty
\end{equation}
for $q > p$ and $q = \frac{d+2}{d}p'$. We will call $p$ and $q$ for which \eqref{first sharp const
  statement} holds ``valid."

Beyond boundedness, one might ask if there exists $f \in L^p$ such that
$\|\mathcal{E}f\|_q/\|f\|_p = A_p$ and furthermore, if sequences $\{f_n\} \subset L^p$ such that
$\lim \|\mathcal{E}f_n\|_q/\|f_n\|_p = A_p$ have any special properties.

This paper answers these questions for the adjoint restriction inequality for the union of a pair of
reflected paraboloids. We prove that extremizers exist in all dimensions whenever the normal
conjecture holds, and that the previously mentioned sequences are precompact modulo certain
symmetries of the new operator provided $p=2$ and the operator norm is ``large enough.'' Finally, we
verify the hypothesis on the operator norm for $d \in \{1,2\}$.

For the standard single paraboloid in lower dimensions ($d \in \{1,2\}$) there are particularly
satisfactory answers. Radial Gaussians are the unique extremizers for $\mathcal{E}$ at the
Stein-Tomas exponent (\cite{Foschi 2007}, \cite{HZ}, \cite{BBCH}), and it is conjectured that this
is true for all dimensions. Christ and Quilodr\'an (\cite{Christ Quilodran}) proved that this is
only possible for $p \in \{1,2\}$.

In higher dimensions or away from the Stein-Tomas exponent, the method of profile decompositions and
mass concentration have been the most successful (\cite{Bourgain}, \cite{MV}, \cite{BV}, \cite{CK},
\cite{KV}, \cite{Shao 2009}, \cite{Stovall}). Since the method analyzes extremizing sequences, it
also yields precompactness of extremizing sequences modulo symmetries of the operator. The profile
decomposition approach has yielded the existence of extremizers in all dimensions but without an
explicit characterization (\cite{Shao 2009}). Stovall extended this approach in \cite{Stovall} to
prove that extremizers exist in all dimensions for all valid exponents by truncating $L^p$ functions
in a controlled way and then applying the $L^2$ theory. Our proof begins with a profile
decomposition mainly modelled after Killip and Vi\c san's formulation in \cite{KV}.

There are similar results for other restriction inequalities, for example restriction to
$S^d$. Extremizers exist and are characters multiplied by constants for $d \in \{1,2\}$ at the
Stein-Tomas exponent (\cite{Shao 2016}, \cite{Christ Shao 2}, \cite{Foschi 2015}) as well as for
some higher dimensions and even exponents (\cite{CO 2015}, \cite{OQ 2021}). The results establishing
the precompactness of extremizing sequences of Christ and Shao in \cite{Christ Shao 1} ($d=2$) and
Frank, Lieb, and Sabin in \cite{FLS} ($d > 2$ conditional on a conjecture) are particularly relevant
to our setting. Specifically, we borrow the strategy of relating the mass of individual bubbles to
the value of the operator norm as well as some computations from \cite{FLS}.

In this article, we address the same questions of existence of extremizers and characterization of
extremizing sequences for the adjoint restriction inequality associated to the union of a pair of
reflected paraboloids. Let $P = \{(\tau, \xi) : \tau = |\xi|^2\}$ and
$P_- = \{(\tau, \xi) : \tau = -|\xi|^2\}$. We pull back $d$-dimensional Lebesgue measure through the
projection map $\R \times \R^d \rightarrow \R^d$ to a measure $d\sigma$ supported on $P$ and
$d\sigma'$ supported on $P_-$. We will analyze the operator
\begin{multline*}
    \mathcal{E}_\pm (f,g)(t,x) := \mathcal{E}f(t,x) + \mathcal{E}_- g(t,x) \\
    := \int_P e^{i(t,x)(\tau, \xi)}f(\xi)d\sigma(\tau, \xi) + \int_{P_-} e^{i(t,x)(\tau, \xi)}
    g(\xi)d\sigma'(\tau, \xi).
\end{multline*}
It can also be expressed as
\begin{equation*}
    \mathcal{E}f(t,x) + \mathcal{E}_-g(t,x) = \int_{\R^d} e^{i(t,x)\cdot (|\xi|^2, \xi)}f(\xi) + e^{i(t,x)\cdot(-|\xi|^2, \xi)}g(\xi)d\xi.
\end{equation*}
When we write $f(\xi)$ or just $f$, we mean the function $\R^d \rightarrow \C$, whereas $fd\sigma$
will represent the measure $fd\xi$ lifted to $P$. Similarly, $gd\sigma'$ will represent the measure
$gd\xi$ lifted to $P_-$.

\subsection*{Symmetries and Definitions}
Let $\mathbf{S}_+ \subset \Iso(L^p(\R^d))$ and $\mathbf{T}_+ \subset \Iso(L^q(\R^{d+1}))$ be the
subgroups generated by
\begin{equation*}
    \begin{array}{lll}
      & Sf(\xi) & T\mathcal{E}f(t,x)\\
      \text{Scaling} & \lambda^{d/p}f(\lambda \xi) & \lambda^{-(d+2)/q}\mathcal{E}f(\lambda^{-2}t, \lambda^{-1} x) \\
      \text{Frequency Translation} & f(\xi - \xi') & e^{i(t|\xi'|^2 + x\cdot \xi')}\mathcal{E}f(t, x+2t\xi') \\
      \text{Spacetime Translation} & e^{i(t_0, x_0)(|\xi|^2, \xi)}f(\xi) & \mathcal{E}f(t + t_0, x + x_0).
    \end{array}
\end{equation*}
They are distinguished by the fact that
$\mathcal{E} \circ \mathbf{S}_+ = \mathbf{T}_+ \circ \mathcal{E}$ and each element generates
non-compact orbits in $L^p$ or $L^q$.

Using these generators, we can write any symmetry $S \in \mathbf{S}_+$ as
\begin{equation}\label{canonical symmetry S+}
    Sf(\xi) = \lambda^{d/p}e^{i(t_0, x_0)(|\lambda\xi - \xi'|^2, \lambda\xi - \xi')}f(\lambda\xi - \xi'),
\end{equation}
and the corresponding $T \in \mathbf{T}_+$ as
\begin{equation}\label{canonical symmetry T+}
    TF(t,x) = \lambda^{-(d+2)/q}e^{i(\lambda^{-2}t|\xi'|^2 + \lambda^{-1}x\cdot\xi')}F(\lambda^{-2}t + t_0, \lambda^{-1}x + x_0 + 2\lambda^{-2}t\xi'),
\end{equation}
for some $\lambda \in \R^+$, $(t_0, x_0) \in \R \times \R^d$, and $\xi' \in \R^d$.

Let $\mathbf{S}_- \subset \Iso(L^p(\R^d))$ and $\mathbf{T}_- \subset \Iso(L^q(\R^{d+1}))$ be the
subgroups generated by
\begin{equation*}
    \begin{array}{lll}
      & Sg(\xi) & T\mathcal{E}_-g(t,x)\\
      \text{Scaling} & \lambda^{d/p}g(\lambda \xi) & \lambda^{-(d+2)/q}\mathcal{E}_- g(\lambda^{-2}t, \lambda^{-1} x) \\
      \text{Frequency Translation} & g(\xi - \xi') & e^{i(-t|\xi'|^2 + x\cdot \xi')}\mathcal{E}_- g(t, x-2t\xi') \\
      \text{Spacetime Translation} & e^{i(t_0, x_0)(-|\xi|^2, \xi)}g(\xi) & \mathcal{E}g(t + t_0, x + x_0).
    \end{array}
\end{equation*}
Note that $\mathbf{T}_- \circ \mathcal{E}_- = \mathcal{E}_- \circ \mathbf{S}_-$ here as well.

Write any $S \in \mathbf{S}_-$ as
\begin{equation}\label{canonical symmetry S-}
    S g(\xi) = \lambda^{d/p}e^{i(-t_0,x_0)(|\lambda\xi - \xi'|^2, \lambda\xi - \xi')}g(\lambda\xi - \xi')
\end{equation}
for some $\lambda \in \R^+$, $(t_0, x_0) \in \R \times \R^d$, and $\xi' \in \R^d$. Denote its
matching $T \in \mathbf{T}_-$ by
\begin{multline}\label{canonical symmetry T-}
    T G(t,x) \\
    = \lambda^{-(d+2)/q} e^{i(-\lambda^{-2}t|\xi'|^2 + \lambda^{-1}x\cdot \xi')}
    G(\lambda^{-2}t+t_0, \lambda^{-1}x+x_0 - 2\lambda^{-2}t\xi').
\end{multline}

We also need notation to indicate whether or not two sequences of symmetries are asymptotically
orthogonal in a weak sense.

\begin{definition}\label{sym decouple condition}
    Let $\{S_n\}, \{S_n'\} \subset \mathbf{S}_+$ and $\{R_n\}, \{R_n'\} \subset \mathbf{S}_-$ be the
    symmetries associated with the parameters $(\lambda_n, t_n, x_n, \xi_n)$,
    $(\lambda_n', t_n', x_n', \xi_n')$, $(\kappa_n, s_n, y_n, \eta_n)$, and
    $(\kappa_n', s_n', y_n', \eta_n')$ respectively. Then we say $\{S_n\}\perp \{R_n\}$ ($\{S_n\}$
    is asymptotically orthogonal to $\{R_n\}$) if
    \begin{enumerate}
        \item $\lim \frac{\lambda_n}{\kappa_n} \in \{0, \infty\}$; or
        \item $\lim |\frac{\lambda_n}{\kappa_n}\eta_n + \xi_n| = \infty$; or
        \item
        $\lim |t_n - (\frac{\lambda_n}{\kappa_n})^2 s_n| + |x_n - \frac{\lambda_n}{\kappa_n}y_n -
        2\frac{\lambda_n}{\kappa_n}s_n(\eta_n + \frac{\lambda_n}{\kappa_n}\xi_n)| = \infty$.
    \end{enumerate}
    We say $\{S_n\}\perp \{S_n'\}$ if
    \begin{enumerate}
        \item $\lim \frac{\lambda_n}{\lambda_n'} \in \{0, \infty\}$; or
        \item $\lim |\frac{\lambda_n}{\lambda_n'}\xi_n' - \xi_n| = \infty$; or
        \item
        $\lim |t_n - (\frac{\lambda_n}{\lambda_n'})^2 t_n'| + |x_n -
        \frac{\lambda_n}{\lambda_n'}x_n' + 2\frac{\lambda_n}{\lambda_n'}t_n'(\xi_n' -
        \frac{\lambda_n}{\lambda_n'}\xi_n)| = \infty$.
    \end{enumerate}
    We define $\{R_n\}\perp \{R_n'\}$ in the same way.
\end{definition}

\subsection*{Results}

Let
\[
    A_p^\pm := \sup_{f,g\in L^p} \frac{\|\mathcal{E}f + \mathcal{E}_- g\|_q}{(\|f\|_p^p +
      \|g\|_p^p)^{1/p}}.
\]
Since
\begin{multline}\label{upper bound comp}
    \left\|\mathcal{E}f_1 + \mathcal{E}_- f_2\right\|_q \leq \left\|\mathcal{E}f_1\right\|_q + \left\| \mathcal{E}_- f_2 \right\|_q \\
    \leq A_p\left( \|f_1\|_p + \|f_2\|_p\right) \leq 2^{1/p'}A_p \left( \|f_1\|_p^p +
        \|f_2\|_p^p\right)^{1/p},
\end{multline}
$A_p^\pm < \infty$ provided $A_p < \infty$. The theorem assumes only that the exponents $p,q$ are
valid. The second is proven for the Stein-Tomas exponent.

\begin{theorem}\label{thm 1}
    \begin{enumerate}
        \item For all $d \in \N$,
        \[
            \left(\frac{1}{\sqrt{\pi}} \cdot
                \frac{\Gamma(\frac{q+1}{2})}{\Gamma(\frac{q+2}{2})}\right)^{1/q} 2^{1/p'} A_p \leq
            A_p^\pm < 2^{1/p'}A_p.
        \]
        \item If $d \in \{1,2\}$,
        \[
            \left(\frac{1}{\sqrt{\pi}} \cdot
                \frac{\Gamma(\frac{q+1}{2})}{\Gamma(\frac{q+2}{2})}\right)^{1/q} 2^{1/p'} A_p <
            A_p^\pm.
        \]
    \end{enumerate}
\end{theorem}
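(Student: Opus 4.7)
The plan is to treat the four inequalities separately. The non-strict bound $A_p^\pm \leq 2^{1/p'}A_p$ is immediate from \eqref{upper bound comp}. For the strict version, I would argue by contradiction: if $A_p^\pm = 2^{1/p'}A_p$, a normalized extremizing sequence $(f_n,g_n)$ forces every step in \eqref{upper bound comp} to be asymptotically tight, and in particular $\|\mathcal{E}f_n + \mathcal{E}_- g_n\|_q - (\|\mathcal{E}f_n\|_q + \|\mathcal{E}_- g_n\|_q) \to 0$. Uniform convexity of $L^q$ then yields asymptotic proportionality $\mathcal{E}f_n / \|\mathcal{E}f_n\|_q - \mathcal{E}_- g_n / \|\mathcal{E}_- g_n\|_q \to 0$ in $L^q$; pairing with Schwartz test functions whose spacetime Fourier transforms concentrate near $P_-$ but away from $P \cap P_- = \{0\}$ (and vice versa), and using the forthcoming profile decomposition to identify $f_n$ (resp.\ $g_n$) with an extremizer for $\mathcal{E}$ modulo symmetries, contradicts $\|\mathcal{E}f_n\|_q \to A_p > 0$.

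For the lower bound in part (1), I test with a pair built from any near-extremizer $f$ for $\mathcal{E}$. The key identity is that $\tilde f(\xi) := \overline{f(-\xi)}$ satisfies $\mathcal{E}_- \tilde f = \overline{\mathcal{E}f}$ and $\|\tilde f\|_p = \|f\|_p$. I apply the frequency-translation symmetries $S_n \in \mathbf{S}_+$ by $\xi_n$ and $R_n \in \mathbf{S}_-$ by $-\xi_n$ with $|\xi_n| \to \infty$; using \eqref{canonical symmetry T+}--\eqref{canonical symmetry T-} and absorbing the spatial translation by the change of variable $y = x + 2t\xi_n$,
\[
\|\mathcal{E}(S_n f) + \mathcal{E}_-(R_n \tilde f)\|_q^q = 2^q \int_{\R^{d+1}} |\mathcal{E}f(t,y)|^q \bigl|\cos\bigl(\arg \mathcal{E}f(t,y) - \tfrac{1}{2}\psi_n(t,y)\bigr)\bigr|^q \, dy\, dt,
\]
where $\psi_n(t,y) := 2(t|\xi_n|^2 - y\cdot\xi_n)$ is a linear phase with $|\nabla \psi_n| \to \infty$. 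Fourier-expanding $|\cos|^q$ and applying Riemann--Lebesgue to each nonzero mode sends the right-hand side to $2^q C \|\mathcal{E}f\|_q^q$, where $C := \pi^{-1/2}\Gamma(\tfrac{q+1}{2})/\Gamma(\tfrac{q+2}{2})$ is the zero-mode coefficient. Since the denominator $(2\|f\|_p^p)^{1/p}$ is independent of $n$, taking $q$-th roots and the supremum over near-extremizers yields $A_p^\pm \geq 2^{1/p'} C^{1/q} A_p$.

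For the strict lower bound in part (2), restricted to $d \in \{1,2\}$ at the Stein--Tomas exponent, I invoke that Gaussians extremize $\mathcal{E}$ (\cite{Foschi 2007}, \cite{HZ}, \cite{BBCH}). Take $f_0(\xi) = e^{-|\xi|^2}$: since $f_0$ is real and even, $\tilde f_0 = f_0$, so testing $(f_0, f_0)$ directly (no symmetry applied) yields the ratio $2^{1/p'}\|\operatorname{Re} \mathcal{E}f_0\|_q/\|f_0\|_2$. Combined with $\|\mathcal{E}f_0\|_q/\|f_0\|_2 = A_p$, the strict inequality reduces to the explicit integral bound $\|\operatorname{Re}\mathcal{E}f_0\|_q^q > C \|\mathcal{E}f_0\|_q^q$. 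Using $\mathcal{E}f_0(t,x) = \pi^{d/2}(1-it)^{-d/2} e^{-|x|^2/(4(1-it))}$, this reduces to a one-variable Gaussian--trigonometric integral, which heuristically exceeds $C$ because the phase $\tfrac{d}{2}\arctan t - |x|^2 t/(4(1+t^2))$ vanishes at the peak $(0,0)$ of the Gaussian weight, so $\cos^q$ exceeds its mean $C$ on the concentration region. Rigorously verifying this inequality via the Fourier expansion of $\cos^q$ and residue computations in one-variable integrals is the main technical obstacle I anticipate.
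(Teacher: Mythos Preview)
Your treatment of the lower bound in part~1 is essentially the paper's: both test with frequency-translated copies of a (near-)extremizer and compute the limit via an oscillation/Riemann--Lebesgue argument (the paper packages this as Lemma~\ref{lambda lemma} together with \cite[Lemma 6.1]{FLS}). Your use of near-extremizers rather than exact extremizers is harmless.

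For the strict upper bound $A_p^\pm < 2^{1/p'}A_p$, however, your route diverges from the paper's and carries a real gap. You want to normalize $f_n$ via $\mathbf{S}_+$ so that $\mathcal{E}f_n \to \mathcal{E}f$ for a fixed extremizer, then argue that $\mathcal{E}_-g_n$ (after the same $\mathbf{T}_+$-symmetry) still has spacetime Fourier support on a downward paraboloid disjoint from $P$. The problem is that the normalizing symmetry for $f_n$ may involve an unbounded frequency-translation parameter, and $\mathbf{T}_+$ frequency translations move the Fourier support of $\mathcal{E}_-g_n$ to a downward paraboloid whose vertex runs off to infinity. In that regime your test-function pairing does not obviously vanish, and weak convergence alone cannot contradict $\|\mathcal{E}_-g_n\|_q \to A_p\|g_n\|_p$: the mass of $g_n$ could simply escape every fixed window. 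The paper avoids this entirely by a different mechanism: from $\mathcal{E}f_n - \mathcal{E}_-g_n \to 0$ and the identity $\mathcal{E}_-g = \overline{\mathcal{E}\tilde g}$ it extracts that both $\|\Im\mathcal{E}f_n + \Im\mathcal{E}\tilde g_n\|_q \to 0$ and $\|\Im\mathcal{E}f_n - \Im\mathcal{E}\tilde g_n\|_q \to 0$, hence $\|\Im\mathcal{E}\tilde g_n\|_q \to 0$. A separate lemma (Lemma~\ref{real imaginary balance}) shows that near-extremizers of $\mathcal{E}$ cannot have vanishing imaginary part---the proof uses precompactness of extremizing sequences plus the observation that $\Im\mathcal{E}g \equiv 0$ would force $\Re\mathcal{E}g$ to be harmonic in $x$ and constant in $t$, which is incompatible with $L^q$.

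For part~2 the strategies are closely related but the paper's reduction is sharper than yours. You fix $\theta = 0$ (i.e.\ test with $(f_0,f_0)$) and must then prove the quantitative inequality $\|\Re\mathcal{E}f_0\|_q^q > C\|\mathcal{E}f_0\|_q^q$ directly. The paper instead considers $\mathcal{J}(\theta) = \int|\Re e^{i\theta}\mathcal{E}f_0|^q$, observes that its average over $\theta$ equals the threshold value, and therefore needs only to show that $\mathcal{J}$ is \emph{nonconstant}---which it verifies by interval arithmetic showing $\mathcal{J}(0) \neq \mathcal{J}(\pi/2)$. This is a strictly weaker thing to check numerically than your inequality (and sidesteps any need for residue computations), so you may want to adopt it.
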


\begin{theorem}\label{thm 2}
    \begin{enumerate}
        \item Let $\{(f_n, g_n)\} \subset L^2 \times L^2$ be such that
        $\|f_n\|_2^2 + \|g_n\|_2^2 = 1$ for all $n$ and
        \[
            \lim_{n\rightarrow \infty} \|\mathcal{E}f_n + \mathcal{E}g_n\|_q = A_2^\pm.
        \]
        If
        \[
            \left(\frac{1}{\sqrt{\pi}} \cdot
                \frac{\Gamma(\frac{q+1}{2})}{\Gamma(\frac{q+2}{2})}\right)^{1/q} 2^{1/p'} A_2 <
            A_2^\pm,
        \]
        then there exist $\{Q_n\} \subset \mathbf{S}_+ \cap \mathbf{S}_-$ and $f,g \in L^2$ such
        that along a subsequence,
        \begin{enumerate}
            \item $\|f - Q_nf_n\|_2 \rightarrow 0$;
            \item $\|g - Q_ng_n\|_2 \rightarrow 0$; and therefore
            \item $\|\mathcal{E}f + \mathcal{E}_- g\|_q = A_2^\pm$.
        \end{enumerate}
        \item For all $d \in \N$, there exist $f, g \in L^2$ normalized so that
        $\|f\|_2^2 + \|g\|_2^2 = 1$ such that
        \[
            \|\mathcal{E}f + \mathcal{E}_- g\|_q = A_2^\pm.
        \]
    \end{enumerate}
\end{theorem}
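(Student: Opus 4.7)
The plan is to prove Theorem~\ref{thm 2} via a linear profile decomposition adapted to the two-sheeted geometry, broadly following \cite{KV} for the single-paraboloid decomposition and borrowing from \cite{FLS} the strategy of controlling each bubble's contribution by the operator norm. For Part~(1) I would first apply a profile decomposition to $\{f_n\}$ using symmetries in $\mathbf{S}_+$, producing profiles $\{\phi^j\}$ with pairwise asymptotically orthogonal symmetry sequences $\{S_n^j\}\subset\mathbf{S}_+$, and similarly to $\{g_n\}$ to obtain $\{\psi^k\}$ with $\{R_n^k\}\subset\mathbf{S}_-$. I would then cross-compare the two families using Definition~\ref{sym decouple condition}: whenever $\{S_n^j\}\not\perp\{R_n^k\}$, the relative symmetry $(S_n^j)^{-1}R_n^k$ converges along a subsequence to a fixed element of $\mathbf{S}_+\cap\mathbf{S}_-$, so after re-indexing I may arrange that each paired $\phi^j$ and correspondingly adjusted $\tilde\psi^j$ sits at a common spacetime-frequency position, while unpaired profiles contribute alongside a zero partner in the opposite slot.

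The main technical step, and principal obstacle, is to establish the asymptotic decoupling identity
\[
  \lim_{n\to\infty}\|\mathcal{E}f_n+\mathcal{E}_- g_n\|_q^q = \sum_j\|\mathcal{E}\phi^j+\mathcal{E}_-\tilde\psi^j\|_q^q + \lim_{J\to\infty}\limsup_{n\to\infty}\|\mathcal{E}r_n^J+\mathcal{E}_-\tilde r_n^J\|_q^q.
\]
Expanding $|a+b|^q$ produces a finite list of cross-terms indexed by pairs of bubbles. The within-$\mathcal{E}$ and within-$\mathcal{E}_-$ cross-terms vanish in the limit by the usual asymptotic orthogonality of profile decompositions; the new ingredient is the vanishing of cross-terms coupling $\mathbf{T}_+$-images of $\mathcal{E}\phi^j$ with $\mathbf{T}_-$-images of $\mathcal{E}_-\psi^k$ when $\{S_n^j\}\perp\{R_n^k\}$ in the sense of Definition~\ref{sym decouple condition}. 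Each of the three clauses of that definition---scale mismatch, divergent relative frequency shift, divergent spacetime displacement---supplies a distinct oscillatory or dispersive mechanism, which I would exploit through change-of-variable and non-stationary phase estimates adapted from those used for the single paraboloid.

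Given the decoupling, combining the paired bound $\|\mathcal{E}\phi^j+\mathcal{E}_-\tilde\psi^j\|_q\leq A_2^\pm(\|\phi^j\|_2^2+\|\tilde\psi^j\|_2^2)^{1/2}$, the unpaired bound $\|\mathcal{E}\phi^j\|_q\leq A_2\|\phi^j\|_2$ (and its analogue for unpaired $g$-profiles), the mass constraint $\sum_j(\|\phi^j\|_2^2+\|\tilde\psi^j\|_2^2)\leq 1$, and the $\ell^{q/2}$-vs-$\ell^1$ monotonicity $\sum_j a_j^{q/2}\leq(\sum_j a_j)^{q/2}$ valid for $q/2\geq 1$, forces the entire mass into a single paired bubble. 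The strict hypothesis $(\tfrac{1}{\sqrt\pi}\Gamma(\tfrac{q+1}{2})/\Gamma(\tfrac{q+2}{2}))^{1/q}2^{1/p'}A_2<A_2^\pm$ enters to quantitatively exclude the degenerate dispersive configuration that only saturates the lower bound of Theorem~\ref{thm 1}(1), thereby ensuring that the single surviving bubble is nontrivially coupled rather than a vanishing limit. Part~(1)(a)--(c) then follows by taking $\{Q_n\}$ to be the symmetry sequence attached to this bubble. For Part~(2), if the strict hypothesis holds then existence is immediate from Part~(1) applied to any maximizing sequence; otherwise $A_2^\pm$ equals the Theorem~\ref{thm 1}(1) lower bound, and the pair $f=g=\phi^*/\sqrt{2}$---with $\phi^*$ any $\mathcal{E}$-extremizer (existing for all $d$ by \cite{Stovall})---attains it, the match to $A_2^\pm$ following from the scale-invariance of $\mathcal{E}_\pm$ at the Stein-Tomas exponent and a stationary-phase computation.
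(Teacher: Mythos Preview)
Your overall architecture matches the paper's, but there is a genuine gap in Part~(1) at the step where you pass from a single paired bubble to a convergent subsequence modulo $\mathbf{S}_+\cap\mathbf{S}_-$. You assert that whenever $\{S_n^j\}\not\perp\{R_n^k\}$, the relative symmetry $(S_n^j)^{-1}R_n^k$ converges along a subsequence to a fixed element of $\mathbf{S}_+\cap\mathbf{S}_-$. This is false. Negating Definition~\ref{sym decouple condition} only tells you that $\lambda_n/\kappa_n$, $\tfrac{\lambda_n}{\kappa_n}\eta_n+\xi_n$, and the relative spacetime displacement stay bounded; it does \emph{not} bound $\eta_n$ or $\xi_n$ individually. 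Both can run off to infinity in a correlated way while the pair remains non-orthogonal, and in that regime no sequence in $\mathbf{S}_+\cap\mathbf{S}_-$ (which has zero frequency translation) can make both $S_n^{j_0}\phi^{j_0}$ and $R_n^{j_0}\psi^{j_0}$ converge. This is exactly the scenario the strict hypothesis is needed to exclude: when $|\eta_n|\to\infty$, an oscillatory-averaging lemma (Lemma~\ref{lambda lemma}) combined with \cite[Lemma~6.1]{FLS} shows that
\[
\lim_n\|\mathcal{E}S_n^{j_0}\phi^{j_0}+\mathcal{E}_-R_n^{j_0}\psi^{j_0}\|_q^q\le \frac{2^{q/2}}{\sqrt\pi}\,\frac{\Gamma(\tfrac{q+1}{2})}{\Gamma(\tfrac{q+2}{2})}\bigl(\|\mathcal{E}\phi^{j_0}\|_q^2+\|\mathcal{E}_-\psi^{j_0}\|_q^2\bigr)^{q/2},
\]
which contradicts \eqref{pair is ext} under the hypothesis. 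Your description of the hypothesis as ruling out a ``degenerate dispersive configuration'' is too vague to capture this; the concentration argument you sketch (paired versus unpaired bubbles) uses only $A_2<A_2^\pm$, which already follows from Theorem~\ref{thm 1}(1).

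For Part~(2) in the equality case, the explicit pair $f=g=\phi^*/\sqrt2$ does not work. With $\phi^*$ real and even this gives $\|\mathcal{E}f+\mathcal{E}_-g\|_q=\sqrt2\,\|\Re\mathcal{E}\phi^*\|_q$, and there is no reason this equals the constant in the lower bound; indeed Section~5 of the paper shows (for $d=1,2$) that $\theta\mapsto\|\Re e^{i\theta}\mathcal{E}\phi^*\|_q$ is nonconstant. The correct construction is $g_\theta(\xi)=e^{i\theta}\overline{\phi^*}(-\xi)$: averaging $\|\mathcal{E}\phi^*+\mathcal{E}_-g_\theta\|_q^q$ over $\theta$ reproduces the lower-bound constant exactly via \cite[Lemma~6.1]{FLS}, and continuity in $\theta$ then yields a specific $\theta_0$ attaining it.
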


The theorems immediately imply the following corollary.

\begin{corollary}
    For $d \in \{1,2\}$, extremizing sequences for $\mathcal{E}_\pm$ are precompact modulo the
    action of $\mathbf{S}_+ \cap \mathbf{S}_-$.
\end{corollary}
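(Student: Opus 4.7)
The corollary follows by direct combination of Theorems~\ref{thm 1} and~\ref{thm 2}, so my plan is simply to unpack that dependence. By Theorem~\ref{thm 1}(2), the dimensional hypothesis $d \in \{1,2\}$ ensures that the strict inequality
\[
    \left(\frac{1}{\sqrt{\pi}} \cdot \frac{\Gamma(\frac{q+1}{2})}{\Gamma(\frac{q+2}{2})}\right)^{1/q} 2^{1/p'} A_2 < A_2^\pm
\]
holds at the Stein--Tomas exponent $p = 2$. This is precisely the gap hypothesized in Theorem~\ref{thm 2}(1).

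Given any extremizing sequence $\{(f_n, g_n)\} \subset L^2 \times L^2$ for $\mathcal{E}_\pm$, I would first rescale each pair so that $\|f_n\|_2^2 + \|g_n\|_2^2 = 1$. This normalization is permissible because the ratio $\|\mathcal{E}f + \mathcal{E}_- g\|_q / (\|f\|_2^2 + \|g\|_2^2)^{1/2}$ is invariant under joint scalar multiplication of $(f,g)$, so the rescaled sequence is still extremizing with limit value $A_2^\pm$. Applying Theorem~\ref{thm 2}(1) then yields a subsequence, symmetries $\{Q_n\} \subset \mathbf{S}_+ \cap \mathbf{S}_-$, and $f, g \in L^2$ such that $\|f - Q_n f_n\|_2 \to 0$ and $\|g - Q_n g_n\|_2 \to 0$. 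Equivalently, along this subsequence $(f_n, g_n) = (Q_n^{-1} f, Q_n^{-1} g) + o(1)$ in $L^2 \times L^2$, which is the desired precompactness modulo the action of $\mathbf{S}_+ \cap \mathbf{S}_-$.

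Because the entire substantive content sits inside the two preceding theorems, there is no genuine obstacle. The only point requiring care is a notational one: confirming that my chosen notion of ``extremizing sequence modulo $\mathbf{S}_+ \cap \mathbf{S}_-$'' is really what Theorem~\ref{thm 2}(1) delivers. This is immediate from the facts that $\mathbf{S}_+ \cap \mathbf{S}_-$ is a subgroup of $\Iso(L^2(\R^d))$ and that $\mathcal{E}_\pm(Q_n f_n, Q_n g_n)$ differs from $\mathcal{E}_\pm(f_n, g_n)$ only by an isometry of $L^q(\R^{d+1})$, so the rescaled extremizing property is preserved.
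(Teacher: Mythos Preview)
Your proposal is correct and matches the paper's approach: the paper simply states that the corollary follows immediately from the two theorems, and your write-up spells out exactly that implication with appropriate care about normalization.
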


Our primary tool is the identity $\mathcal{E}_- f = \overline{\mathcal{E} \widetilde{f}}$ where
$\widetilde{f}(\xi) = \overline{f}(-\xi)$. For an extremizing sequence $\{(f_n, g_n)\}$, we apply a
standard profile decomposition modeled on the one from \cite{KV} to each sequence independently. We
then bound the interactions between bubbles whose symmetries are asymptotically orthogonal in the
sense of Definition \ref{sym decouple condition}. From this we deduce that almost all of the $L^2$
mass of $\{f_n\}$ and $\{g_n\}$ come from single bubbles whose symmetries are not asymptotically
orthogonal.

There are two cases. Either the bubbles are translated to infinity in frequency space, or they
remain bounded. An inequality from \cite{FLS} shows that under the hypothesis of Theorem \ref{thm 2}
part 1, we must be in the second case and therefore we can extract a convergent subsequence. We
verify this condition computationally for $d \in \{1,2\}$. If the bubbles translate off to infinity,
no sequence of symmetries can make them both converge. However, we know the operator norm exactly in
this case and are able to construct extremizers for $\mathcal{E}_\pm$ from extremizers for
$\mathcal{E}$, completing the proof of Theorem \ref{thm 2}.

In this paper, we can only calculate useful lower bounds on the operator norm of
$\mathcal{E}_\pm: \ell^p(L^p) \rightarrow L^q$ in dimensions where we know extremizers exist for
$\mathcal{E}: L^p \rightarrow L^q$ and have a particularly simple form, such as Gaussians. This is
conjectured to hold in all dimensions at the Stein-Tomas exponent, but based on the result of Christ
and Quilodr\'an (\cite{Christ Quilodran}), it seems unlikely that this will be possible for other
valid exponents. In addition, without further simplification the numerical computation must be run
for each dimension individually rather than once for the general case.

\subsection*{Acknowledgements}

This project was suggested and overseen by Betsy Stovall and supported in part by NSF
DMS-1653264. The author would like to thank her for many helpful conversations and invaluable
guidance in the writing of this paper.

\section{Bounds on $A_p^\pm$}

The purpose of this section is to prove Theorem \ref{thm 1} part 1. The first lemma resembles an
observation of Allaire (\cite{Allaire}).

\begin{lemma}\label{lambda lemma}
    Let $g \in L^q(\R^{1+d})$ and let $\varepsilon > 0$ be sufficiently small. Then there exists
    $\lambda_0 > 0$ such that
    \begin{equation*}
        \left|\int|\Im e^{i(-t|\eta|^2 + x\cdot\eta)}g|^qdtdx - \frac{1}{2\pi}\int_0^{2\pi} \int |\Im e^{i\theta}g|^qdtdxd\theta\right| < \varepsilon
    \end{equation*}
    for all $|\eta| > \lambda_0$.
\end{lemma}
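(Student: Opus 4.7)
The plan is to compute pointwise $|\Im(e^{i\theta}g(t,x))|^q = |g(t,x)|^q\, h(\theta+\alpha(t,x))$, where $h(\theta) := |\sin\theta|^q$ and $\alpha(t,x) := \arg g(t,x)$ (set to $0$ where $g=0$). Specializing $\theta = \phi_\eta(t,x) := -t|\eta|^2 + x\cdot \eta$, and noting that $\frac{1}{2\pi}\int_0^{2\pi}|\sin(\theta+\alpha)|^q\,d\theta = \widehat{h}(0)$ independently of $\alpha$, the quantity inside the absolute value in the lemma reduces to
\[
\Delta(\eta) := \int |g(t,x)|^q\bigl[h(\phi_\eta(t,x)+\alpha(t,x)) - \widehat h(0)\bigr]\,dt\,dx,
\]
and it suffices to prove $|\Delta(\eta)|<\varepsilon$ for $|\eta|$ larger than some $\lambda_0$.

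Since $h$ is continuous and $\pi$-periodic, Fejér's theorem provides, for any $\delta>0$, a trigonometric polynomial $P(\theta)=\sum_{|k|\le K}a_k e^{2ik\theta}$ with $\|h-P\|_\infty<\delta$; in particular $|a_0-\widehat h(0)|<\delta$. Replacing $h$ by $P$ in $\Delta(\eta)$ introduces an error of at most $2\delta\|g\|_q^q$, and the main term becomes the finite sum
\[
\sum_{0<|k|\le K} a_k \int |g(t,x)|^q\, e^{2ik\alpha(t,x)}\,e^{2ik\phi_\eta(t,x)}\,dt\,dx,
\]
where the $k=0$ contribution cancels the $\widehat h(0)\|g\|_q^q$ piece up to an $O(\delta\|g\|_q^q)$ correction arising from $a_0 - \widehat h(0)$.

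Each integrand above is the Fourier transform of the $L^1$ function $F_k := |g|^q e^{2ik\alpha}$ (whose $L^1$ norm is at most $\|g\|_q^q$) evaluated at the frequency $2k(-|\eta|^2,\eta)\in \R\times\R^d$, whose modulus tends to $\infty$ as $|\eta|\to\infty$ for every fixed $k\ne 0$. By Riemann--Lebesgue, each such term vanishes as $|\eta|\to\infty$; since only $2K$ terms are present, one may choose $\lambda_0$ large enough that the whole sum is bounded by $\delta\sum|a_k|$. Selecting $\delta$ small in terms of $\varepsilon$ and $\|g\|_q$ then concludes the proof.

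The main obstacle is to avoid working directly with the formal Fourier series of $h$: for $q$ close to $1$ the function $|\sin|^q$ has only a little extra regularity beyond continuity and its Fourier series need not converge uniformly, so one cannot simply exchange summation and integration for the full series. Passing through a uniform trigonometric polynomial approximation reduces the oscillatory part to finitely many Riemann--Lebesgue estimates, which seems the cleanest way to combine equidistribution of the phase with merely $L^q$ control on $g$.
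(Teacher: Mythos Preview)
Your argument is correct and takes a genuinely different route from the paper. The paper first reduces to $g\in C_c^\infty$ by density, then discretizes the $t$-axis into intervals of length $\lambda_0^{-1}$, freezes $g$ on each, and uses the change of variables $\theta = t|\eta|^2 - x\cdot\eta$ to replace each short $t$-integral by an integral over an interval of length $\lambda_0^{-1}|\eta|^2\gg 1$ of the $2\pi$-periodic function $|\Im e^{-i\theta}g(\alpha,x)|^q$; the long interval is then compared to an integer number of full periods. Your approach avoids both the density reduction and the discretization: writing $|\Im(e^{i\theta}g)|^q=|g|^q h(\theta+\alpha)$ and approximating $h=|\sin|^q$ uniformly by a trigonometric polynomial turns the problem into finitely many applications of Riemann--Lebesgue for the $L^1$ functions $|g|^q e^{2ik\alpha}$. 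This is cleaner and makes the mechanism (equidistribution via decay of Fourier transforms) explicit; the paper's hands-on argument, by contrast, makes the rate of convergence more transparent.

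One small point of presentation: your final two sentences should not be read as choosing $\delta$ after $\lambda_0$. The correct order is to fix $\delta$ (hence $P$, $K$, and the $a_k$) first so that the $O(\delta\|g\|_q^q)$ errors are below $\varepsilon/2$, and \emph{then} choose $\lambda_0$ so that the finite sum $\sum_{0<|k|\le K}|a_k|\,|\widehat{F_k}(2k(-|\eta|^2,\eta))|$ is below $\varepsilon/2$; since the $a_k$ are already fixed, this is immediate from Riemann--Lebesgue. (Alternatively, for Fej\'er means one has $\sum_k|a_k|\le\sum_k|\widehat h(k)|<\infty$ uniformly in $K$, so the bound $\delta\sum|a_k|$ does go to zero with $\delta$.) Also, in the paper's setting $q=2(d+2)/d>2$, so $|\sin|^q$ is in fact $C^1$ and your caveat about $q$ near $1$ is not needed here, though the Fej\'er step does no harm.
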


\begin{proof}
    Let $g' \in C_c^\infty(\R^{d+1})$ be such that $\|g-g'\|_q^q < \varepsilon/4$. By the triangle
    inequality,
    \begin{multline*}
        \left| \int |\Im e^{i(t,x)\cdot(-|\eta|^2, \eta)}g|^q dtdx - \frac{1}{2\pi}\int_0^{2\pi}\int |\Im e^{i\theta}g|^qdtdxd\theta\right| \\
        \leq \left| \int |\Im e^{i(t,x)\cdot(-|\eta|^2, \eta)}g'|^q dtdx -
            \frac{1}{2\pi}\int_0^{2\pi}\int |\Im e^{i\theta}g'|^qdtdxd\theta\right| +
        \frac{\varepsilon}{2}.
    \end{multline*}
    Hence it suffices to prove the result for $g \in C_c^\infty$.
	
    Take $g \in C_c^\infty(\R^{d+1})$. Let $X = \lambda_0^{-1}\Z$ and $I = [0,1)$. Since $g$ is
    smooth and compactly supported, we can take $\lambda_0 > \varepsilon^{-1}$ large enough that
    \[
	\int |\Im e^{i(t,x)\cdot(-|\eta|^2,\eta)}g|^qdtdx \\
	= \int_{\R^d} \sum_{\alpha \in X} \int_{\alpha + \lambda_0^{-1}I} |\Im
        e^{i(t,x)\cdot(-|\eta|^2, \eta)}g(\alpha, x)|^q dtdx + O(\varepsilon),
    \]
    \[
	\int \frac{1}{2\pi}\int_0^{2\pi} |\Im e^{i\theta}g|^qd\theta dtdx = \int_{\R^d} \sum_{\alpha
          \in X} \frac{\lambda_0^{-1}}{2\pi} \int_0^{2\pi} |\Im e^{i\theta}g(\alpha, x)|^q d\theta
        dx + O(\varepsilon),
    \]
    and
    \[
	\int |g|^q dtdx = \int_{\R^d} \sum_{\alpha \in X} \lambda_0^{-1}|g(\alpha, x)|^q dx +
        O(\varepsilon).
    \]
    Furthermore, by the change of variables $\theta = t|\eta|^2 - x\cdot\eta$,
    \begin{align*}
      \int_{\alpha + \lambda_0^{-1}I} &|\Im e^{i(t,x)\cdot(-|\eta|^2, \eta)}g(\alpha, x)|^q dt \\
                                      &= |\eta|^{-2} \int_{|\eta|^2 \alpha - x\cdot\eta}^{|\eta|^2 \alpha - x\cdot\eta + \lambda_0^{-1}|\eta|^2} |\Im e^{-i\theta} g(\alpha, x)|^q d\theta\\
                                      &= |\eta|^{-2} \left[O(1)|g(\alpha, x)|^q + \frac{\lambda_0^{-1}|\eta|^2}{2\pi}\int_0^{2\pi} |\Im e^{i\theta}g(\alpha, x)|^qd\theta \right] \\
                                      &= O(1)|\eta|^{-2}|g(\alpha, x)|^q + \frac{\lambda_0^{-1}}{2\pi}\int_0^{2\pi} |\Im e^{i\theta}g(\alpha, x)|^qd\theta.
    \end{align*}
    Plugging this into the sum and simplifying,
    \begin{align*}
      \int &|\Im e^{i(t, x)\cdot(-|\eta|^2, \eta)}g|^qdtdx \\
           &= \int \sum_{\alpha \in X} \left[O(1)|\eta|^{-2}|g(\alpha, x)|^q + \frac{\lambda_0^{-1}}{2\pi}\int_0^{2\pi} |\Im e^{i\theta}g(\alpha, x)|^qd\theta\right] dx + O(\varepsilon) \\
           &= O(1)|\eta|^{-2}\lambda_0 \|g\|_q^q + \int \frac{1}{2\pi} \int_0^{2\pi} |\Im e^{i\theta}g|^q d\theta dt dx + O(\varepsilon).
    \end{align*}
    Hence for $|\eta| > \lambda_0 > \varepsilon^{-1}$, we have
    \[
	\left|\int |\Im e^{i(t, x)\cdot(-|\eta|^2, \eta)}g|^qdtdx - \int \frac{1}{2\pi}
            \int_0^{2\pi} |\Im e^{i\theta}g|^q d\theta dt dx\right| = O(\varepsilon).
    \]
\end{proof}

\begin{lemma}\label{real imaginary balance}
    Let $\varepsilon > 0$ be sufficiently small. Then there exists a $\delta$ such that for all
    $f \in L^p$ with $\|f\|_p = 1$ and $\|\mathcal{E}f\|_q > A_p - \varepsilon$,
    $\|\Im \mathcal{E}f\|_q > \delta$. Furthermore,
    $\lim_{\varepsilon \rightarrow 0} \delta =: C_p > 0$.
\end{lemma}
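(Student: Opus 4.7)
The plan is to exploit the identity $\mathcal{E}_-\tilde f = \overline{\mathcal{E}f}$, where $\tilde f(\xi) := \overline{f(-\xi)}$ satisfies $\|\tilde f\|_p = \|f\|_p$. Writing $2\Re\mathcal{E}f = \mathcal{E}f + \overline{\mathcal{E}f} = \mathcal{E}f + \mathcal{E}_-\tilde f$ and applying the definition of $A_p^\pm$ to the admissible pair $(f,\tilde f)$, I obtain
\[
2\|\Re\mathcal{E}f\|_q \;=\; \|\mathcal{E}f + \mathcal{E}_-\tilde f\|_q \;\leq\; A_p^\pm \bigl(\|f\|_p^p + \|\tilde f\|_p^p\bigr)^{1/p} \;=\; 2^{1/p}A_p^\pm\|f\|_p,
\]
so $\|\Re\mathcal{E}f\|_q \leq 2^{-1/p'}A_p^\pm\|f\|_p$. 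This is the only nontrivial estimate in the proof; everything else will be soft.

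Next I invoke the strict upper bound $A_p^\pm < 2^{1/p'}A_p$ from Theorem \ref{thm 1} part 1 and set $C_p := A_p - 2^{-1/p'}A_p^\pm > 0$. The pointwise inequality $|\mathcal{E}f| \leq |\Re\mathcal{E}f| + |\Im\mathcal{E}f|$ combined with Minkowski in $L^q$ then gives
\[
\|\Im\mathcal{E}f\|_q \;\geq\; \|\mathcal{E}f\|_q - \|\Re\mathcal{E}f\|_q \;>\; (A_p - \varepsilon) - (A_p - C_p) \;=\; C_p - \varepsilon
\]
whenever $\|f\|_p = 1$ and $\|\mathcal{E}f\|_q > A_p - \varepsilon$. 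Setting $\delta := C_p - \varepsilon$, which is positive precisely when $\varepsilon < C_p$ (this is the meaning of ``$\varepsilon$ sufficiently small''), yields the asserted inequality, and $\delta \to C_p > 0$ as $\varepsilon \to 0^+$.

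The main obstacle is that this reduction is entirely soft and leans on the strict inequality $A_p^\pm < 2^{1/p'}A_p$: with only the non-strict bound extracted from \eqref{upper bound comp}, $C_p$ would vanish and the argument would be vacuous. The real content is therefore that strict inequality, which I am taking as input here. Morally, the strictness quantifies the fact that the triangle step in \eqref{upper bound comp} cannot be (nearly) saturated — equality there would force $\mathcal{E}f$ and $\mathcal{E}_- g$ to be non-negative pointwise scalar multiples of each other, yet their distributional Fourier transforms live on the disjoint paraboloids $P$ and $P_-$ (which meet only at the origin), so such alignment is impossible except trivially.
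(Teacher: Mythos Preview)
Your argument is circular. You invoke the strict inequality $A_p^\pm < 2^{1/p'}A_p$ from Theorem~\ref{thm 1} part~1, but in the paper that strict inequality is \emph{proved using} Lemma~\ref{real imaginary balance}: the proof of the upper bound assumes a sequence $\{(f_n,g_n)\}$ approaches equality in \eqref{upper bound comp}, and reaches a contradiction by applying Lemma~\ref{real imaginary balance} to bound $\|\Im\mathcal{E}\widetilde{g}_n\|_q$ from below. So you have reduced the lemma to the theorem, while the paper deduces the theorem from the lemma; neither direction alone closes the loop.

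What you have really shown is that the two statements are essentially equivalent, via the clean identity $2\|\Re\mathcal{E}f\|_q = \|\mathcal{E}f + \mathcal{E}_-\tilde f\|_q$. That observation is correct and even sharpens the relationship, but it does not supply the missing content. The paper's direct proof of the lemma uses precompactness of extremizing sequences for $\mathcal{E}$ (from \cite{Stovall}) together with Lemma~\ref{lambda lemma} to force any hypothetical limit to satisfy $\Im\mathcal{E}g \equiv 0$, and then rules this out by a PDE argument (the Schr\"odinger equation forces $\Re\mathcal{E}g$ to be a time-independent harmonic function, which cannot lie in $L^q$). Your ``moral'' paragraph gestures at the right obstruction, but does not replace that argument; to make your approach self-contained you would need an independent proof of the strict upper bound, which is precisely the hard part.
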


\begin{proof}
    Suppose we had a sequence $\{f_n\}$ with $\|f_n\|_p = 1$ such that
    $\|\Im \mathcal{E}f_n\|_q \rightarrow 0$ and $\|\mathcal{E}f_n\|_q \rightarrow A_p$. Extremizing
    sequences are precompact modulo symmetries (\cite{Stovall}), so there exists $f \in L^p$ and
    symmetries $\{(S_n, T_n)\}$ such that $\|\mathcal{E}f\|_q = A_p$ and $S_nf_n \rightarrow f$
    along a subsequence.
	
    We want to prove that there exists a function $g$ such that $\|\Im \mathcal{E}g\|_q = 0$. The
    quantity $\|\Im \mathcal{E}f\|_q$ is invariant under scaling and spacetime translation of $f$ so
    without loss of generality assume
    $\mathcal{E}f_n = e^{i(t,x)\cdot(|\eta_n|^2, \eta_n)}\mathcal{E}f(t,x+2t\eta_n)$. By hypothesis
    $\|\Im \mathcal{E}f_n\|_q \rightarrow 0$ so by a change of variables,
    $\|\Im e^{(t,x)\cdot(-|\eta_n|^2,\eta_n)}\mathcal{E}f\|_q \rightarrow 0$.
	
    Since $\mathcal{E}f \not\equiv 0$,
    $\int_0^{2\pi}\int |\Im e^{i\theta}\mathcal{E}f|^qdtdxd\theta \neq 0$. By Lemma \ref{lambda
      lemma} there exists $\varepsilon > 0$ and $\lambda_0 > 0$ such that
    $\|\Im e^{i(t,x)\cdot(-|\eta|^2, \eta)}\mathcal{E}f\|_q > \varepsilon$ for all
    $|\eta| > \lambda_0$. Therefore $\limsup |\eta_n| < \lambda_0$. The set
    $\{|\eta| \leq \lambda_0\}$ is compact, so by passing to a subsequence there exists an $\eta_0$
    such that $\eta_n \rightarrow \eta_0$. Since
    $\|\Im e^{i(t,x)\cdot(-|\eta|^2, \eta)}\mathcal{E}f\|_q$ is continuous in $\eta$ by the
    dominated convergence theorem,
    $\|\Im e^{i(t,x)\cdot(-|\eta_0|^2, \eta_0)} \mathcal{E}f\|_q = 0$. Changing variables and moving
    the frequency translation through $\mathcal{E}$, we have $g$ a frequency translation of $f$ such
    that $\|\Im \mathcal{E}g\|_q = 0$.
	
    However, this is impossible. Examining the Schr\"odinger equation, we find that it is equivalent
    to
    \begin{equation*}
        \begin{cases}
            \partial_t \Im u - \Delta \Re u = 0,\\
            \partial_t \Re u + \Delta \Im u = 0.
        \end{cases}
    \end{equation*}
    Hence $\partial_t \Re \mathcal{E}g \equiv 0$ and $\Delta \Re \mathcal{E}g \equiv 0$. By Weyl's
    lemma $\Re \mathcal{E}g(t, \cdot)$ agrees almost everywhere with a harmonic function, but since
    the time derivative is also zero, $\Re \mathcal{E}g(t, \cdot)$ is the same non-zero harmonic
    function for all $t$. Such functions are not in $L^q$.
	
    The above implies that there does not exist a sequence
    $\{(\|\mathcal{E}f_n\|_q, \|\Im\mathcal{E}f_n\|_q)\}$ in
    $A := \{(\|\mathcal{E}f\|_q, \|\Im \mathcal{E}f\|_q) : \|f\|_p = 1\} \subset [0, A_p] \times [0,
    A_p]$ that converges to $(A_p, 0)$. Then
    \begin{equation*}
        C_p := \liminf_{\varepsilon \rightarrow 0} \big( \inf \{\|\Im\mathcal{E}f\|_q : \|f\|_p = 1, \|\mathcal{E}f\|_q > A_p - \varepsilon\}\big) > 0.
    \end{equation*}
\end{proof}

\begin{proof}[Proof of Theorem \ref{thm 1} part 1]
    First, we prove that
    \[
        \left(\frac{1}{\sqrt{\pi}} \cdot
            \frac{\Gamma(\frac{q+1}{2})}{\Gamma(\frac{q+2}{2})}\right)^{1/q} 2^{1/p'} A_p \leq
        A_p^\pm.
    \]
    
    Let $f \in L^p$ be an extremizer for $\mathcal{E}$ normalized so that $\|f\|_p = 1$. Let
    $e_1 \in \R^d$ be the first unit vector and define sequences
    \[
	f_n(\xi) := f(\xi - ne_1) \quad \text{and} \quad g_n(\xi) := f(-\xi - ne_1).
    \]
    Then by Lemma \ref{lambda lemma} and \cite[Lemma 6.1]{FLS},
    \begin{align*}
      \lim &\int \left| \mathcal{E}f_n + \mathcal{E}_-g_n\right|^qdtdx \\
           &= \lim \int \left| \mathcal{E}f_n + \overline{\mathcal{E}\widetilde{g_n}}\right|^q \\
           &= \lim \int \left| e^{i(t,x)\cdot(n^2, ne_1)}\mathcal{E}f(t, x + 2nte_1) + e^{i(t,x)\cdot(-n^2, -ne_1)}\overline{\mathcal{E}f}(t, x + 2nte_1)\right|^q \\
           &= \lim \int \left| e^{i(t,x)\cdot(2n^2, 2ne_1)}\mathcal{E}f(t, x + 2nte_1) + \overline{\mathcal{E}f}(t, x + 2nte_1)\right|^q \\
           &= \lim \int \left| e^{i(t,x)\cdot(-2n^2, 2ne_1)}\mathcal{E}f(t, x) + \overline{\mathcal{E}f}(t, x)\right|^q \\
           &= \frac{1}{2\pi} \int_0^{2\pi} \int \left| e^{i\omega}|\mathcal{E}f| + |\overline{\mathcal{E}f}|\right|^q \\
           &= \frac{2^q}{\sqrt{\pi}} \cdot \frac{\Gamma(\frac{q+1}{2})}{\Gamma(\frac{q+2}{2})}A_p^q.
    \end{align*}
    Since $(\|f_n\|_p^p + \|g_n\|_p^p)^{1/p} = 2^{1/p}$, this proves the first inequality.
	
    Next, following the proof of \cite[Lemma 6.1]{FLS} let
    \[
	\phi(t) := \frac{1}{\pi}\int_0^\pi (1+t\cos \theta)^{q/2}d\theta.
    \]
    Then
    \begin{align*}
      \phi'(t) &= \frac{q}{2\pi} \int_0^\pi (1+t\cos \theta)^{(q-2)/2}\cos\theta d\theta \\
               &= \frac{q}{2\pi} \int_0^{\pi/2} \left((1+t\cos\theta)^{(q-2)/2} - (1-t\cos\theta)^{(q-2)/2}\right)\cos\theta d\theta \\
               &> 0.
    \end{align*}
    Since $\phi(0) = 1$ and
    \[
	\phi(1) = \frac{2^{q/2}}{\sqrt{\pi}} \frac{\Gamma(\frac{q+1}{2})}{\Gamma(\frac{q+2}{2})}
    \]
    using the integral representation of the beta function, the second inequality follows.

    Second, we prove that
    \[
        A_p^\pm < 2^{1/p'}A_p.
    \]
    
    Note that for a nontrivial sequence $\{(f_n,g_n)\}$ to achieve
    \begin{equation*}
        \lim_{n \rightarrow \infty} \frac{\|\mathcal{E}f_n + \mathcal{E}_- g_n\|_q}{(\|f_n\|_p^p + \|g_n\|_p^p)^{1/p}} = 2^{1/p'}A_p,
    \end{equation*}
    it must approach equality for every inequality in \eqref{upper bound comp}. Therefore, aiming
    for a contradiction, we assume that there exists a sequence $\{(f_n, g_n)\}$ normalized in
    $\ell^p(L^p)$ such that $\|\mathcal{E}f_n\|_q \rightarrow A_p$,
    $\|\mathcal{E}g_n\|_q \rightarrow A_p$, and
    $\|\mathcal{E}f_n - \mathcal{E}_- g_n\|_q \rightarrow 0$.
    Expanding the integral using the identity
    $\mathcal{E}_- g = \overline{\mathcal{E}\widetilde{g}}$,
    \begin{equation*}
        \|\mathcal{E}f_n - \mathcal{E}_- g_n\|_q^q = \int \left( |\Re\mathcal{E}f_n - \Re\mathcal{E}\widetilde{g}_n|^2 + |\Im\mathcal{E}f_n + \Im\mathcal{E}\widetilde{g}_n|^2\right)^{q/2}.
    \end{equation*}
    Hence $\|\Im\mathcal{E}f_n + \Im\mathcal{E}\widetilde{g}_n\|_q \rightarrow 0$. If we multiply
    $f_n$ and $g_n$ by $i$ and use the linearity of $\mathcal{E}$ and $\mathcal{E}_-$, we see that
    \begin{align*}
      \|\mathcal{E}f_n - \mathcal{E}_- g_n\|_q^q &= \|\mathcal{E}(if_n) - \mathcal{E}_- (ig_n)\|_q \\
                                                 &= \int \left( |-\Im\mathcal{E}f_n + \Im\mathcal{E}\widetilde{g}_n|^2 + |\Re\mathcal{E}f_n + \Re\mathcal{E}\widetilde{g}_n|^2\right)^{q/2}
    \end{align*}
    and hence $\|\Im \mathcal{E}f_n - \Im\mathcal{E}\widetilde{g}_n\|_q \rightarrow 0$ as
    well. However, by the triangle inequality and Lemma \ref{real imaginary balance}
    \begin{equation*}
        \begin{split}
            \lim_{n\rightarrow \infty} \|\Im \mathcal{E}f_n - \Im\mathcal{E}\widetilde{g}_n\|_q &\geq \lim 2\|\Im\mathcal{E}\widetilde{g}_n\|_q - \|\Im\mathcal{E}f_n + \Im\mathcal{E}\widetilde{g}_n\|_q \\
            &= \lim 2\|\Im\mathcal{E}\widetilde{g}_n\|_q \\
            &\geq 2C_pA_p,
        \end{split}
    \end{equation*}
    which is a contradiction. Therefore no such sequence $\{(f_n,g_n)\}$ exists and
    $A_p^\pm < 2^{1/p'}A_p$.
\end{proof}

\section{Bubble pairing and orthogonality}

Let $p = 2$ and $q = \frac{2(d+2)}{d}$.

We will use the following formulation of the $L^2$ profile decomposition for the Schr\"odinger
equation.

\begin{proposition}{\cite[Theorem 4.7]{KV}}\label{profile decomp}
    Let $\{f_n\} \subset L^2(\R^d)$ be bounded. Then after passing to a subsequence, there exist
    \begin{enumerate}
        \item $J^* \in \N \cup \{\infty\}$;
        \item functions $\phi^j \in L^2$ for all $j < J^*$;
        \item remainders $w_n^J \in L^2$ for all $J < J^*$;
        \item and sequences $\{\lambda_n^j\} \subset \R^+$, $\{(t_n^j, x_n^j)\} \subset \R^{d+1}$,
        and $\{\xi_n^j\} \subset \R^d$ defining $\{S_n^j\} \subset \mathbf{S}_+$ for all $j < J^*$
        and all $n$
    \end{enumerate}
    such that for all $J < J^*$
    \begin{equation}\label{profile def}
        f_n = \sum_{j=1}^J (\lambda_n^j)^{d/p} e^{i(t_n^j, x_n^j)\cdot(|\lambda_n^j \xi - \xi_n^j|^2, \lambda_n^j \xi - \xi_n^j)} \phi^j(\lambda_n^j\xi - \xi_n^j) + w_n^J,
    \end{equation}
    \begin{equation}\label{remainder ext}
        \lim_{J \rightarrow J^*} \limsup_{n\rightarrow \infty} \|\mathcal{E}w_n^J\|_q = 0,
    \end{equation}
    \begin{equation}\label{profile ortho}
        \sup_J \lim_{n\rightarrow\infty} \left[ \|f_n\|_2^2 - \sum_{j=1}^J \|\phi^j\|_2^2 - \|w_n^J\|_2^2\right] = 0,
    \end{equation}
    \begin{equation}\label{ext ortho}
        \sup_J \lim_{n \rightarrow \infty} \left[ \|\mathcal{E}f_n\|_{q}^{q} - \sum_{j=1}^J \|\mathcal{E}\phi^j\|_{q}^{q} - \|\mathcal{E}r_n^J\|_{q}^{q}\right] = 0,
    \end{equation}
    and for all $j \neq k$,
    \begin{equation}
        \{S_n^j\} \perp \{S_n^k\}.
    \end{equation}
\end{proposition}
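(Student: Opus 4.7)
The plan is to reproduce the standard iterative bubble extraction, whose analytic heart is an \emph{inverse Strichartz inequality}: if $\{u_n\} \subset L^2$ is bounded by $M$ and $\liminf_n \|\mathcal{E} u_n\|_q \geq A > 0$, then after passing to a subsequence there exist $\{S_n\} \subset \mathbf{S}_+$ and a nontrivial $\phi \in L^2$ with $(S_n)^{-1} u_n \rightharpoonup \phi$ in $L^2$ and $\|\phi\|_2 \gtrsim (A/M)^\beta A$ for some $\beta = \beta(d) > 0$. This inverse Strichartz inequality is the one genuinely hard step; in \cite{KV} it is obtained by combining a refined Strichartz estimate, which controls $\|\mathcal{E} u\|_q$ by a geometric mean of $\|u\|_2$ and a frequency-cube-localized quantity of the form $\sup_Q |Q|^{-1/2}\|\widehat{u}\,\mathbf{1}_Q\|_{L^\infty}$, with a pigeonholing argument that produces the scale, spatial-temporal translation, and frequency parameters defining $S_n$.

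Given the inverse Strichartz inequality, I would extract bubbles inductively. Set $w_n^0 := f_n$; having constructed $\{w_n^{J-1}\}$, let $A_J := \limsup_n \|\mathcal{E} w_n^{J-1}\|_q$. If $A_J = 0$, stop and put $J^* := J-1$; otherwise apply the inverse Strichartz inequality to $\{w_n^{J-1}\}$ to obtain $S_n^J \in \mathbf{S}_+$ and $\phi^J \in L^2 \setminus \{0\}$ with $(S_n^J)^{-1} w_n^{J-1} \rightharpoonup \phi^J$, and set $w_n^J := w_n^{J-1} - S_n^J \phi^J$. Iterating and unwinding gives \eqref{profile def}. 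Weak convergence at each step yields the asymptotic Pythagorean identity $\|w_n^{J-1}\|_2^2 = \|\phi^J\|_2^2 + \|w_n^J\|_2^2 + o_n(1)$, which telescopes to \eqref{profile ortho}; consequently $\sum_J \|\phi^J\|_2^2 \leq \limsup_n \|f_n\|_2^2 < \infty$, and combined with the lower bound $\|\phi^J\|_2 \gtrsim A_J(A_J/M)^\beta$ this forces $A_J \to 0$, which is exactly \eqref{remainder ext}.

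For the symmetry orthogonality $\{S_n^j\} \perp \{S_n^k\}$ with $j \neq k$, a direct computation with the explicit form \eqref{canonical symmetry S+} shows that if all three alternatives of Definition \ref{sym decouple condition} failed along a subsequence for some pair $j < k$, then $(S_n^k)^{-1} S_n^j$ would converge, when tested against Schwartz functions, to a fixed element of the ambient symmetry group; applied to $\phi^j$, this forces the weak limit of $(S_n^k)^{-1} w_n^{k-1}$ to pick up a nonzero contribution from $\phi^j$, contradicting the independent construction of $\phi^k$ as that weak limit. Once the orthogonality is in hand, the Strichartz decoupling \eqref{ext ortho} follows by expanding $|\sum_{j=1}^J T_n^j \mathcal{E}\phi^j + \mathcal{E} w_n^J|^q$ pointwise and showing that all mixed terms vanish in $L^1$: orthogonality of symmetries induces asymptotic spatial-temporal separation of the effective supports of the $T_n^j \mathcal{E}\phi^j$ (with remainders controlled by the pointwise decay of each fixed $\mathcal{E}\phi^j$), and each bubble decouples from the Strichartz remainder by a Brezis--Lieb style weak-limit argument. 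The inverse Strichartz step remains the chief obstacle; the rest is careful bookkeeping following the template of \cite{KV}.
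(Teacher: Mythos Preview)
The paper does not prove this proposition at all: it is quoted verbatim as \cite[Theorem 4.7]{KV} and used as a black box, so there is no ``paper's own proof'' to compare against. Your outline is a faithful sketch of the standard Killip--Vi\c{s}an argument (inverse Strichartz $\Rightarrow$ iterative bubble extraction $\Rightarrow$ $L^2$ Pythagoras, symmetry orthogonality, and $L^q$ decoupling), which is exactly what one would find by following the citation.
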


The same holds for $\mathcal{E}_-$ with \eqref{profile def} replaced by
\begin{equation}\label{refl profile def}
    g_n = \sum_{j=1}^J (\kappa_n^j)^{d/p} e^{i(-s_n^j, y_n^j)\cdot(|\kappa_n^j \xi - \eta_n^j|^2, \kappa_n^j \xi - \eta_n^j)} \psi^j(\kappa_n^j\xi - \eta_n^j) + r_n^J.
\end{equation}

Let $f_n, g_n$ be bounded sequences in $L^2$ and apply the profile decompositions of \eqref{profile
  def} and \eqref{refl profile def} respectively. If the $J^*$ are different, take the larger of the
two and pad the other sum with zero bubbles.

\begin{lemma}\label{one or the other}
    Let $\{S_n^j\}$ and $\{R_n^j\}$ be the sequences of symmetries associated with the
    decompositions of $\{f_n\}$ and $\{g_n\}$ respectively. If $\{S_n^j\}\not\perp \{R_n^k\}$, then
    $\{S_n^j\} \perp \{R_n^{k'}\}$ for all $k' \neq k$.
\end{lemma}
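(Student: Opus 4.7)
The plan is to argue by contradiction. Suppose both $\{S_n^j\} \not\perp \{R_n^k\}$ and $\{S_n^j\} \not\perp \{R_n^{k'}\}$ hold; I will show that $\{R_n^k\} \not\perp \{R_n^{k'}\}$, contradicting the orthogonality among distinct bubbles in the profile decomposition \eqref{refl profile def}. Pass to a subsequence on which every relevant ratio and sum converges in $[0,\infty]$ or $\R^d$. Definition \ref{sym decouple condition} then yields $\alpha_n := \lambda_n^j/\kappa_n^k \to a \in (0,\infty)$, the vector $\alpha_n \eta_n^k + \xi_n^j \to v \in \R^d$, and the quantity in condition (3) for $(j,k)$ bounded; analogously the second hypothesis gives $\alpha'_n := \lambda_n^j/\kappa_n^{k'} \to a' \in (0,\infty)$, $\alpha'_n \eta_n^{k'} + \xi_n^j \to v' \in \R^d$, and the corresponding condition (3) bounded.

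Set $\beta_n := \kappa_n^k/\kappa_n^{k'} = \alpha'_n/\alpha_n$. I will verify that all three conditions for $\{R_n^k\} \perp \{R_n^{k'}\}$ fail. Since $\beta_n \to a'/a \in (0,\infty)$, condition (1) fails. Differencing the bounded frequency sums yields $\alpha_n(\beta_n \eta_n^{k'} - \eta_n^k) \to v' - v$, so $\beta_n \eta_n^{k'} - \eta_n^k$ is bounded and condition (2) fails. Differencing the time parts of the two condition-(3) bounds gives $\alpha_n^2(s_n^k - \beta_n^2 s_n^{k'}) = O(1)$, which, since $\alpha_n \to a > 0$, bounds the time part of condition (3) for $\{R_n^k\}\perp\{R_n^{k'}\}$.

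The substantive step is bounding the space part. Differencing the space quantities from condition (3) for $(j,k)$ and $(j,k')$ and dividing by $\alpha_n$ produces
\[
(y_n^k - \beta_n y_n^{k'}) + 2\bigl[s_n^k(\eta_n^k + \alpha_n\xi_n^j) - \beta_n s_n^{k'}(\eta_n^{k'} + \alpha'_n\xi_n^j)\bigr] = O(1).
\]
Using $\alpha_n\beta_n = \alpha'_n$ one obtains the identity $\beta_n(\eta_n^k + \alpha_n\xi_n^j) - (\eta_n^{k'} + \alpha'_n\xi_n^j) = \beta_n\eta_n^k - \eta_n^{k'}$, and substituting $s_n^k = \beta_n^2 s_n^{k'} + O(1)$ then rewrites the bracket as a constant multiple of the space form in condition (3) for $\{R_n^k\}\perp\{R_n^{k'}\}$, modulo a residual of the shape $(s_n^k - \beta_n^2 s_n^{k'})(\eta_n^k + \alpha_n\xi_n^j)$. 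Provided this residual is controlled, condition (3) also fails and the contradiction is complete.

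The main obstacle is the bookkeeping in this last step. The factor $s_n^k - \beta_n^2 s_n^{k'}$ is bounded by the previous paragraph, but the companion factor $\eta_n^k + \alpha_n\xi_n^j$ differs from the bounded quantity $\alpha_n\eta_n^k + \xi_n^j$ provided by condition (2) of the non-orthogonality hypothesis whenever $a \neq 1$. Tracking how this residual interacts with the $\pm 2\beta_n s_n^{k'}(\eta_n^{k'} - \beta_n\eta_n^k)$ term of condition (3) requires careful attention to the sign conventions that distinguish the $\mathbf{S}_+$ and $\mathbf{S}_-$ canonical forms \eqref{canonical symmetry S+} and \eqref{canonical symmetry S-}, and is the one place where the proof calls for nontrivial algebraic care.
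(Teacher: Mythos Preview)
The paper does not actually prove this lemma: it states only that ``the proof is a long, unenlightening calculation'' and omits it. So there is no reference argument to compare against, and your proposal must stand on its own.

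Your contradiction strategy---assuming $\{S_n^j\}\not\perp\{R_n^k\}$ and $\{S_n^j\}\not\perp\{R_n^{k'}\}$ and deriving $\{R_n^k\}\not\perp\{R_n^{k'}\}$---is the right idea, and your reductions for conditions (1), (2), and the time half of (3) are correct. But the gap you flag in your last paragraph is genuine, and your write-up does not close it. The residual is exactly $2(s_n^k-\beta_n^2 s_n^{k'})(\eta_n^k+\alpha_n\xi_n^j)$: the first factor is $O(1)$, but condition (2) only controls $\alpha_n\eta_n^k+\xi_n^j$, and when $a\neq1$ this says nothing about $\eta_n^k+\alpha_n\xi_n^j$. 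Concretely, take
\[
\lambda_n^j=\kappa_n^{k'}=1,\quad \kappa_n^k=2,\quad \xi_n^j=n,\quad \eta_n^k=-2n,\quad \eta_n^{k'}=-n,
\]
\[
t_n^j=s_n^{k'}=1,\quad s_n^k=4,\quad x_n^j=y_n^{k'}=0,\quad y_n^k=12n.
\]
Every quantity in Definition~\ref{sym decouple condition} vanishes for the pairs $(j,k)$ and $(j,k')$, yet the space part of condition (3) for $\{R_n^k\}\perp\{R_n^{k'}\}$ computes to $24n$. Thus no contradiction is obtained along this route, and ``careful attention to sign conventions'' cannot by itself repair the step.

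To finish, one must work with the composite symmetries rather than with the coordinate expressions in Definition~\ref{sym decouple condition} taken at face value---for instance by passing through the identification $R_n\mapsto Q_n\in\mathbf{S}_+$ used in the proof of Lemma~\ref{diff parab decoupling}, so that both hypotheses and the profile-decomposition orthogonality live inside a single group where transitivity is clean. Your final paragraph gestures toward this, but as written the proposal is a correct outline rather than a complete proof.
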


The proof is a long, unenlightening calculation, so we omit it for readability.

\begin{lemma}\label{same parab decoupling}
    Let $\{S_n^j\}, \{S_n^{j'}\} \subset \mathbf{S}_+$. Let $\phi^j, \phi^{j'} \in L^2$. If
    $\{S_n^j\} \perp \{S_n^{j'}\}$, then
    \[
	\lim_n \int |\mathcal{E}S_n^j\phi^j||\mathcal{E}S_n^{j'}\phi^{j'}|^{q-1} +
        |\mathcal{E}S_n^j\phi^j|^{q-1}|\mathcal{E}S_n^{j'}\phi^{j'}| = 0.
    \]
\end{lemma}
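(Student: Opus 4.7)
The plan is to reduce to an integral involving two fixed Schwartz functions and then use asymptotic orthogonality to force decay. By the symmetry of the two terms in the integrand under $j \leftrightarrow j'$, and since Definition \ref{sym decouple condition} is preserved under this swap (up to a bounded factor, after passing to a subsequence on which $\lambda_n^j/\lambda_n^{j'}$ is bounded away from $0$ and $\infty$ in the non-scaling cases), it suffices to show
\[
I_n := \int |\mathcal{E}S_n^j \phi^j|\,|\mathcal{E}S_n^{j'}\phi^{j'}|^{q-1}\,dt\,dx \to 0.
\]

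First, I would reduce to $\phi^j, \phi^{j'}$ Schwartz with compactly supported Fourier transforms. The map $(\phi,\psi) \mapsto \int |\mathcal{E}S_n^j\phi|\,|\mathcal{E}S_n^{j'}\psi|^{q-1}$ is uniformly continuous on bounded sets of $L^2 \times L^2$ by H\"older and Stein--Tomas (using that the $S_n^j$ are $L^2$-isometries), so $L^2$-approximation introduces a negligible error in $I_n$ uniformly in $n$. For test functions of this form, $F^j := \mathcal{E}\phi^j$ and $F^{j'} := \mathcal{E}\phi^{j'}$ are smooth, uniformly bounded in spacetime, and rapidly decaying in $x$ for each fixed $t$.

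Next, using $\mathcal{E}\circ S_n^j = T_n^j \circ \mathcal{E}$ and the change of variables induced by the $L^q$-isometry $T_n^j$---the modulation phase drops out in the modulus, and the Jacobian exactly cancels the normalization---I would rewrite
\[
I_n = \int |F^j(t,x)|\,|H_n F^{j'}(t,x)|^{q-1}\,dt\,dx, \qquad H_n := (T_n^j)^{-1}T_n^{j'} \in \mathbf{T}_+.
\]
A direct (if tedious) calculation expresses the parameters $(\mu_n, \tau_n, z_n, \zeta_n)$ of $H_n$ in terms of those of $S_n^j, S_n^{j'}$ and shows, on passing to a subsequence, that $\{S_n^j\}\perp\{S_n^{j'}\}$ forces one of: (i) $\mu_n \to 0$ or $\mu_n \to \infty$; (ii) $\mu_n$ bounded and $|\zeta_n| \to \infty$; or (iii) $\mu_n, \zeta_n$ bounded and the spacetime-translation center of $H_n F^{j'}$ escaping every compact subset of spacetime.

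Finally I would run the case analysis, first approximating $F^j$ by a function of compact spacetime support so that the tail contribution is uniformly dominated by $\|F^j_{\text{tail}}\|_q\|F^{j'}\|_q^{q-1}$. On the compact piece, case (i) with $\mu_n \to \infty$ follows from $\|H_n F^{j'}\|_\infty = \mu_n^{-(d+2)/q}\|F^{j'}\|_\infty \to 0$; case (i) with $\mu_n \to 0$ follows because the spacetime support of $H_n F^{j'}$ shrinks to measure tending to zero; case (iii) follows because the essential support of $H_n F^{j'}$ is translated off the compact set. The main obstacle is case (ii): the frequency translation induces a \emph{time-dependent} spatial shift $2\mu_n^{-2}t\zeta_n$ rather than a rigid translation, so supports are not simply disjoint. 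To finish, I would observe that this shift forces the overlap of the essential supports of $F^j$ and $H_n F^{j'}$ to collapse into a thin temporal slab of width $\sim |\zeta_n|^{-1}$, giving spacetime measure tending to zero; combined with the uniform $L^\infty$ bounds on $F^j$ and $F^{j'}$, this drives the integral over the compact piece to zero. The symmetric term is handled identically.
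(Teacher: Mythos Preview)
Your overall strategy is sound and, for the frequency-separation case (condition 2), genuinely different from and more elementary than the paper's. The paper handles that case by approximating $\phi^j,\phi^{j'}$ in $L^2$ by $C_c^\infty$ functions on the $\xi$-side and then invoking Tao's bilinear restriction estimate at an exponent slightly above $q/2$, gaining a negative power of the frequency separation $N_n$. Your sheared-support argument avoids that deep input entirely and works purely with measure and $L^\infty$ bounds; this is a real simplification.

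That said, there is a gap in the proposal as written. You approximate only $F^j$ by a function of compact spacetime support, leaving $F^{j'}=\mathcal{E}\phi^{j'}$ as a genuine Schr\"odinger solution, which is supported on all of $\R^{d+1}$. Then ``the spacetime support of $H_nF^{j'}$ shrinks to measure tending to zero'' (your case $\mu_n\to 0$) is simply false, and the ``thin temporal slab'' claim in case~(ii) has no content: the slab is determined by where $H_nF^{j'}$ lives, not by where $F^j$ lives, and without a support constraint on $F^{j'}$ there is no slab. The fix is immediate and is exactly what the paper does at its very first step: approximate \emph{both} $\mathcal{E}\phi^j$ and $\mathcal{E}\phi^{j'}$ in $L^q$ by $C_c^\infty(\R^{d+1})$ functions. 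With $F^{j'}$ replaced by a function supported in a box of side $R'$, your cases go through cleanly: in case~(ii), for each $x$ in the $F^j$-box the constraint $|\mu_n^{-1}x+z_n+2\mu_n^{-2}t\zeta_n|\le C R'$ forces $t$ into an interval of length $\lesssim \mu_n^2 R'/|\zeta_n|$, so the intersection of supports has measure $\lesssim R^d R'/|\zeta_n|\to 0$; in case~(i) with $\mu_n\to 0$ the support of $H_nF^{j'}$ has measure $\lesssim \mu_n^{d+2}$ against an $L^\infty$ bound $\lesssim\mu_n^{-(d+2)(q-1)/q}$, netting a factor $\mu_n^{(d+2)/q}\to 0$. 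Once you make that one change, your argument is complete and strictly more self-contained than the paper's.
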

\begin{proof}
    By the symmetry of the statement, we need only consider the first term in the limit. Let
    $\varepsilon > 0$ and let $\Phi^j \in C^\infty_{cpct}(\R^{d+1})$ be such that
    $\|\Phi^j - \mathcal{E}\phi^j\|_q = O(\varepsilon)$ (resp. $\Phi^{j'}$). Expanding, changing
    variables, and dealing with the remainders by H\"older,
    \begin{align*}
      &\int |\mathcal{E}S_n^j \phi^j| |\mathcal{E}S_n^{j'}\phi^{j'}|^{q-1} \\
      &= \int (\lambda_n^j)^{-(d+2)/q} (\lambda_n^{j'})^{-(d+2)/q'} |\Phi^j((\lambda_n^j)^{-2} t + t_n^j, (\lambda_n^j)^{-1}x + x_n^j + 2(\lambda_n^j)^{-2}t\xi_n^j)| \\
      &\qquad |\Phi^{j'}((\lambda_n^{j'})^{-2}t + t_n^{j'}, (\lambda_n^{j'})^{-1}x + x_n^{j'} + 2(\lambda_n^{j'})^{-2}t\xi_n^{j'}|^{q-1} + O(\varepsilon)\\
      &= \int (\lambda_n^j)^{-(d+2)/q}(\lambda_n^{j'})^{(d+2)/q}|\Phi^j(\left(\frac{\lambda_n^{j'}}{\lambda_n^j}\right)^2 t + t_n^j, \left(\frac{\lambda_n^{j'}}{\lambda_n^j}\right)x + x_n^j + 2\left(\frac{\lambda_n^{j'}}{\lambda_n^j}\right)^2 \xi_n^j)| \\
      &\qquad |\Phi^{j'}(t + t_n^{j'}, x + x_n^{j'} + 2t\xi_n^{j'})|^{q-1} + O(\varepsilon)\\
      &= \int \left(\frac{\lambda_n^{j'}}{\lambda_n^j}\right)^{(d+2)/q} |\Phi^j(\left(\frac{\lambda_n^{j'}}{\lambda_n^j}\right)^2 t + t_n^j, \left(\frac{\lambda_n^{j'}}{\lambda_n^j}\right)x + x_n^j + 2\left(\frac{\lambda_n^{j'}}{\lambda_n^j}\right)^2 \xi_n^j)| \\
      &\qquad |\Phi^{j'}(t + t_n^{j'}, x + x_n^{j'} + 2t\xi_n^{j'})|^{q-1} + O(\varepsilon).
    \end{align*}
    By H\"older's inequality, we see that
    \[
	\int |\mathcal{E}S_n^j\phi^j| |\mathcal{E}S_n^{j'}\phi^{j'}|^{q-1} \leq
        \left(\frac{\lambda_n^{j'}}{\lambda_n^j}\right)^{(d+2)/q} \|\Phi^j\|_\infty
        \|(\Phi^{j'})^{q-1}\|_{1} + O(\varepsilon)
    \]
    and
    \[
	\int |\mathcal{E}S_n^j\phi^j| |\mathcal{E}S_n^{j'}\phi^{j'}|^{q-1} \leq
        \left(\frac{\lambda_n^{j'}}{\lambda_n^j}\right)^{-(d+2)/q'} \|\Phi^j\|_1
        \|(\Phi^{j'})^{q-1}\|_\infty + O(\varepsilon).
    \]
    Therefore if $\lim \frac{\lambda_n^{j'}}{\lambda_n^j} \in \{0,\infty\}$ (i.e.\
    $\{S_n^j\}\perp \{S_n^{j'}\}$ is satisfied by condition 1),
    \[
	\int |\mathcal{E}S_n^j\phi^j||\mathcal{E}S_n^{j'}\phi^{j'}|^{q-1} = O(\varepsilon).
    \]
    Taking $\varepsilon \rightarrow 0$, this proves the claim.
	
    Now we may assume that, after passing to a subsequence in $n$,
    $\lim \frac{\lambda_n^{j'}}{\lambda_n^j} = R \in (0, \infty)$. By scaling, we may also assume
    that $\lambda_n^j = r_n^{-1}$ such that $\lim r_n = R$ and $\lambda_n^{j'} = 1$ for all $n$.
	
    Let $\varepsilon > 0$ and let $f,g \in C^\infty_{cpct}$ be such that
    $\|f - \phi^j\|_2 = O(\varepsilon)$ and $\|g - \phi^{j'}\|_2 = O(\varepsilon)$. By the
    boundedness of $\mathcal{E}$, the triangle inequality, and H\"older,
    \begin{align*}
      \int|\mathcal{E}S_n^j\phi^j||\mathcal{E}S_n^{j'}\phi^{j'}|^{q-1} &\leq \int |\mathcal{E}S_n^j f||\mathcal{E}S_n^{j'}g|^{q-1} + O(\varepsilon) \\
                                                                       &\leq \|(\mathcal{E}S_n^jf)(\mathcal{E}S_n^{j'}g)\|_{s_\delta} \|(\mathcal{E}S_n^{j'} g)^{q-2}\|_{s_\delta'} + O(\varepsilon) \\
                                                                       &= \|(\mathcal{E}S_n^j f)(\mathcal{E}S_n^{j'} g)\|_{s_\delta} \|\mathcal{E}S_n^{j'}g\|_{s_\delta'(q-2)}^{q-2} + O(\varepsilon),
    \end{align*}
    where $\delta > 0$ is sufficiently small (to be determined), $s_\delta = \frac{q + \delta}{2}$,
    and $t_\delta' = \frac{ds_\delta(q-2)}{d+2}$. As a function of $\delta$,
    $s_\delta'(q-2) = \frac{(q+\delta)(q-2)}{q+\delta-2}$ is continuous near zero and for
    $\delta=0$, $s_0'(q-2) = q$. Since the exponent pair $(2, \frac{2(d+2)}{d})$ is in the interior
    of the known range of restriction estimates (e.g.\ \cite{Tao}), there exists a $\delta_0 > 0$
    such that the operators $\mathcal{E}$ and $\mathcal{E}_-$ are bounded from $L^{t_{\delta_0}}$ to
    $L^{s_{\delta_0}'(q-2)}$. Let
    \[
        N_n := \frac{\dist(\supp S_n^j f, \supp S_n^{j'}g)}{\diam(\supp S_n^j f) + \diam(\supp
          S_n^{j'}g)}.
    \]
    This number is well-defined since $f$ and $g$ have compact support. Note that since $\{r_n\}$ is
    bounded, $N_n \sim \dist(\supp S_n^j f, \supp S_n^{j'} g)$ with some $n$-independent
    constants. Continuing the estimate using scaled bilinear restriction (\cite[Corollary 1.3]{Tao})
    and the scaling assumption,
    \begin{align*}
      \|(\mathcal{E}S_n^j f)(\mathcal{E}S_n^{j'}g)\|_{s_{\delta_0}} \|\mathcal{E}S_n^{j'}g\|_{s_{\delta_0}'(q-2)}^{q-2} &\lesssim N_n^{d-\frac{d+2}{s_{\delta_0}}} \|f\|_2 \|g\|_2 \|S_n^{j'}g\|_{t_{\delta_0}}^{q-2} \\
                                                                                                                        &\lesssim_{R, f, g} N_n^{d-\frac{d+2}{s_{\delta_0}}}.
    \end{align*}
    Since $s_{\delta_0} > \frac{d+2}{d}$ and $N_n \sim |r_n\xi_n^{j'} - \xi_n^j|$ for large enough
    $n$, taking $n\rightarrow \infty$ and $\varepsilon \rightarrow 0$ proves the claim when
    $\{S_n^j\}\perp \{S_n^{j'}\}$ is satisfied by 2 but not 1.
	
    We turn to the final case where neither 1 nor 2 is satisfied, but condition 3 is. We make the
    same assumptions on scaling as in the previous case. Changing variables,
    \begin{equation*}
        \int |\mathcal{E}S_n^j \phi^j||\mathcal{E}S_n^{j'}\phi^{j'}|^{q-1} = \int R^{\frac{d+2}{q}} |\mathcal{E}\phi^j (A_n(t,x) + b_n)||\mathcal{E}\phi^{j'}|^{q-1}
    \end{equation*}
    where
    \[
	A_n = \begin{pmatrix}
            r_n^2 & 0\\
            2r_n(r_n\xi_n^j - \xi_n^{j'}) & r_nI_d
	\end{pmatrix}
	\; \text{and} \; b_n = \begin{pmatrix}
            t_n^j - r_n^2t_n^{j'}\\
            x_n^j - r_nx_n^{j'} + 2r_nt_n^{j'}(\xi_n^{j'} - r_n\xi_n^j)
	\end{pmatrix}.
    \]
    Since condition 2 is not satisfied, we may pass to a subsequence in $n$ so that
    $\|A_n\|_{op} \sim 1$. Condition 3 implies $|b_n| \rightarrow \infty$, so
    $|A_n(t,x) + b_n| \rightarrow \infty$ uniformly on compact sets. Approximating
    $\mathcal{E}\phi^j$ and $\mathcal{E}\phi^{j'}$ in $L^q$ by functions $F,G \in C^\infty_{cpct}$,
    we see that
    \[
        \lim \int R^\frac{d+2}{q}|F(A_n(t,x) + b_n)||G|^{q-1} = 0
    \]
    by dominated convergence. The lemma follows by taking better and better approximations.
\end{proof}

\begin{lemma}\label{diff parab decoupling}
    Let $\{S_n^j\} \subset \mathbf{S}_+$ and $\{R_n^j\} \subset \mathbf{S}_-$. Let
    $\phi^j,\psi^j \in L^2$. If $\{S_n^j\}\perp \{R_n^j\}$, then
    \[
	\lim \int |\mathcal{E}S_n^j\phi^j||\mathcal{E}_-R_n^j\psi^j|^{q-1} +
        |\mathcal{E}S_n^j\phi^j|^{q-1}|\mathcal{E}_-R_n^j\psi^j| = 0.
    \]
\end{lemma}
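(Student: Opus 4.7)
The plan is to reduce the claim directly to Lemma \ref{same parab decoupling} using the identity $|\mathcal{E}_- g| = |\overline{\mathcal{E}\widetilde{g}}| = |\mathcal{E}\widetilde{g}|$, where $\widetilde{g}(\xi) = \overline{g(-\xi)}$. Since only absolute values of $\mathcal{E}_- R_n^j \psi^j$ appear in the two integrands, we may freely replace each occurrence by $|\mathcal{E}\widetilde{R_n^j \psi^j}|$. The problem therefore reduces to showing that $\widetilde{R_n^j \psi^j}$ equals $S_n^{j,+} \widetilde{\psi^j}$ for some $S_n^{j,+} \in \mathbf{S}_+$, and that the mixed orthogonality hypothesis $\{S_n^j\} \perp \{R_n^j\}$ translates into the pure $\mathbf{S}_+$ orthogonality $\{S_n^j\} \perp \{S_n^{j,+}\}$. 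Once both are verified, Lemma \ref{same parab decoupling} applied to $\phi^j$ and $\widetilde{\psi^j}$ finishes the argument.

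The first step is the routine conjugate-and-reflect computation. Starting from the canonical form \eqref{canonical symmetry S-} of $R_n^j$ with parameters $(\kappa_n^j, s_n^j, y_n^j, \eta_n^j)$, a direct substitution shows
\[
\widetilde{R_n^j \psi^j}(\xi) = (\kappa_n^j)^{d/p} e^{i(s_n^j, y_n^j)\cdot(|\kappa_n^j \xi + \eta_n^j|^2,\, \kappa_n^j \xi + \eta_n^j)} \widetilde{\psi^j}(\kappa_n^j \xi + \eta_n^j),
\]
which matches the canonical form \eqref{canonical symmetry S+} for the element $S_n^{j,+} \in \mathbf{S}_+$ with parameters $(\kappa_n^j, s_n^j, y_n^j, -\eta_n^j)$. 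The sign flip on $\eta_n^j$ comes from the reflection $\xi \mapsto -\xi$, and the peculiar $(-s_n^j, y_n^j)$ convention in \eqref{canonical symmetry S-} was precisely chosen so that conjugation flips the sign of $s_n^j$ twice and it reappears with a $+$.

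The second step is to compare the two versions of asymptotic orthogonality in Definition \ref{sym decouple condition}. Substituting the new parameters $(\lambda_n', t_n', x_n', \xi_n') = (\kappa_n^j, s_n^j, y_n^j, -\eta_n^j)$ into the three conditions for $\{S_n^j\} \perp \{S_n^{j,+}\}$ and simplifying, the scale condition is identical to condition 1 of the mixed case, condition 2 reads $\lim |-\tfrac{\lambda_n^j}{\kappa_n^j}\eta_n^j - \xi_n^j| = \infty$, which agrees with the mixed condition 2 after pulling out the sign, and condition 3 reproduces the mixed condition 3 term-for-term once one expands $\xi_n' - \tfrac{\lambda_n}{\lambda_n'}\xi_n = -\eta_n^j - \tfrac{\lambda_n^j}{\kappa_n^j}\xi_n^j$. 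Thus the two orthogonality notions are literally equivalent under the identification above.

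With both ingredients in hand, the claim follows: the hypothesis $\{S_n^j\}\perp\{R_n^j\}$ yields $\{S_n^j\}\perp\{S_n^{j,+}\}$, and Lemma \ref{same parab decoupling} applied to the pair $(\phi^j, \widetilde{\psi^j})$ with symmetries $(S_n^j, S_n^{j,+})$ gives
\[
\lim_n \int |\mathcal{E}S_n^j \phi^j||\mathcal{E}S_n^{j,+}\widetilde{\psi^j}|^{q-1} + |\mathcal{E}S_n^j \phi^j|^{q-1}|\mathcal{E}S_n^{j,+}\widetilde{\psi^j}| = 0,
\]
which is exactly the desired conclusion after undoing the absolute-value identity. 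The only real obstacle is the careful bookkeeping in the phase computation above, but the analytic substance is already contained in Lemma \ref{same parab decoupling}.
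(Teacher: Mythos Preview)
Your proof is correct and follows essentially the same route as the paper's. The paper also reduces to Lemma~\ref{same parab decoupling} via the identity $\overline{\mathcal{E}_- R_n^j f} = \mathcal{E}Q_n^j\widetilde{f}$, where $Q_n^j \in \mathbf{S}_+$ is exactly your $S_n^{j,+}$ with parameters $(\kappa_n^j, s_n^j, y_n^j, -\eta_n^j)$; the only difference is that the paper asserts the equivalence of the two orthogonality notions without writing out the substitution you carried through in detail.
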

\begin{proof}
    As usual, we can express $S_n^j$ and $R_n^j$ in the form of \eqref{canonical symmetry S+} and
    \eqref{canonical symmetry S-}:
    \begin{align*}
      S_n^j \phi^j(\xi) &= (\lambda_n^j)^{d/p} e^{i(t_n^j,x_n^j)\cdot(|\lambda_n^j\xi - \xi_n^j|^2, \lambda_n^j\xi - \xi_n^j)}\phi^j(\lambda_n^j\xi - \xi_n^j), \\
      R_n^j \psi^j(\xi) &= (\kappa_n^j)^{d/p} e^{i(-s_n^j, y_n^j)\cdot(|\kappa_n^j\xi - \eta_n^j|^2, \kappa_n^j\xi - \eta_n^j)}\psi^j(\kappa_n^j\xi - \eta_n^j).
    \end{align*}
    Define the sequence $\{U_n^j\} \subset \mathbf{S}_+$ by
    \[
	Q_n^j f(\xi) := (\kappa_n^j)^{d/p} e^{i(s_n^j, y_n^j)\cdot(|\kappa_n^j\xi + \eta_n^j|^2,
          \kappa_n^j\xi + \eta_n^j)}f(\kappa_n^j\xi + \eta_n^j).
    \]
    By a change of variables,
    \begin{align*}
      \overline{\mathcal{E}_- R_n^j f}(t,x) &= \int e^{-i(t,x)\cdot(-|\xi|^2, \xi)}\overline{R_n^jf}(\xi)d\xi \\
                                            &= \int e^{i(t,x)\cdot(|\xi|^2, -\xi)}(\kappa_n^j)^{d/p}e^{i(s_n^j, y_n^j)\cdot(|\kappa_n^j\xi - \eta_n^j|^2, -\kappa_n^j\xi + \eta_n^j)} \overline{f}(\kappa_n^j\xi - \eta_n^j)d\xi \\
                                            &= \int e^{i(t,x)\cdot(|\xi|^2, \xi)}(\kappa_n^j)^{d/p}e^{i(s_n^j, y_n^j)\cdot(|\kappa_n^j\xi + \eta_n^j|^2, \kappa_n^j\xi + \eta_n^j)}\overline{\widetilde{f}}(\kappa_n^j\xi + \eta_n^j)d\xi \\
                                            &= \mathcal{E}Q_n^j \overline{\widetilde{f}}(t,x).
    \end{align*}
    The symmetries $Q_n^j$ have the same dilation and spacetime translation as $R_n^j$, only with
    negative frequency translation. From this it is clear that
    $\{S_n^j\} \perp \{R_n^j\} \iff \{S_n^j\} \perp \{Q_n^j\}$ and
    $|\mathcal{E}_- R_n^j f| = |\mathcal{E} Q_n^j \overline{\widetilde{f}}|$. Thus the claim follows
    from the previous lemma.
\end{proof}

We will need the following elementary calculus lemma.

\begin{lemma}\label{calc inequality}
    There exists $C_n > 0$ such that for all $a_1, \dots, a_n \in \C$,
    \[
	\left| |\sum_{j=1}^n a_j|^q - \sum_{j=1}^n |a_j|^q\right| \leq C_n \sup_{1\leq j \neq j'
          \leq n} |a_j||a_{j'}|^{q-1}.
    \]
\end{lemma}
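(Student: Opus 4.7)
The plan is to exploit the fact that both sides are continuous and homogeneous of degree $q$ in $(a_1,\dots,a_n)\in\C^n$. By rescaling, I may assume $\max_j|a_j|=1$, and after relabeling, that $|a_1|=1$. Set $E:=\max_{j\geq 2}|a_j|\in[0,1]$ and $w:=\sum_{j\geq 2}a_j$, so that $|w|\leq(n-1)E$. If $E=0$, all $a_j$ with $j\geq 2$ vanish and both sides of the claim equal zero, so I will assume $E>0$ throughout.

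First I would bound the right-hand side from below: choosing $j_0\geq 2$ with $|a_{j_0}|=E$, the term $|a_{j_0}||a_1|^{q-1}=E$ appears in the supremum, so
\[
\sup_{1\leq j\neq j'\leq n}|a_j||a_{j'}|^{q-1}\geq E.
\]
Next I would bound the left-hand side by splitting via the triangle inequality,
\[
\Bigl||a_1+w|^q - |a_1|^q - \sum_{j\geq 2}|a_j|^q\Bigr| \;\leq\; \bigl||a_1+w|^q-|a_1|^q\bigr| \;+\; \sum_{j\geq 2}|a_j|^q,
\]
and estimating the first summand via the fundamental theorem of calculus applied to $t\mapsto|a_1+tw|^q$ on $[0,1]$. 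Since $q=2+4/d>2$, the map $z\mapsto|z|^q$ lies in $C^1(\C)$ with gradient of modulus bounded by $q|z|^{q-1}$, and $|a_1+tw|\leq 1+(n-1)E\leq n$, so the first summand is at most $qn^{q-1}|w|\leq C_n E$. The second summand satisfies $\sum_{j\geq 2}|a_j|^q\leq(n-1)E^q\leq(n-1)E$, using $E\leq 1$ and $q\geq 1$.

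Combining these estimates yields that the left-hand side is at most $C_n'E$, which when compared to the lower bound $E$ on the right-hand side gives the claim with constant $C_n'$. There is no real obstacle here; the lemma is elementary. The only subtle point is verifying that the mean value bound does not degenerate when the segment $t\mapsto a_1+tw$ passes near the origin, but this is automatic because $q>2$ makes $z\mapsto|z|^q$ continuously differentiable at $0$ with derivative bounded on any bounded set.
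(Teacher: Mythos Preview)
Your proof is correct. Both your argument and the paper's rely on the same analytic core, namely the mean value bound $\bigl||a+b|^q-|a|^q\bigr|\leq q\,(\sup_{[a,a+b]}|z|^{q-1})\,|b|$, but the organization differs. The paper first establishes the two-term case $\bigl||a+b|^q-|a|^q-|b|^q\bigr|\lesssim\max\{|a||b|^{q-1},|a|^{q-1}|b|\}$ and then telescopes $\bigl|\sum_{j\leq k}a_j\bigr|^q-\bigl|\sum_{j\leq k-1}a_j\bigr|^q-|a_k|^q$ over $k$, finishing with the triangle and quasi-triangle inequalities. You instead exploit homogeneity to normalize $\max_j|a_j|=|a_1|=1$, which reduces the right-hand side to the single parameter $E=\max_{j\geq2}|a_j|$ and allows a single application of the mean value bound (with $b=w=\sum_{j\geq2}a_j$) plus the trivial estimate $\sum_{j\geq2}|a_j|^q\leq(n-1)E$. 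Your route is slightly more direct and avoids the telescoping bookkeeping; the paper's route is more modular but otherwise equivalent in strength and difficulty.
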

\begin{proof}
    First, let $a,b \in \C$ be such that $|a| \geq |b|$. Let $F(z) = z^q$. Then by the triangle
    inequality and the mean value theorem,
    \begin{align*}
      \left| |a+b|^q - |a|^q - |b|^q\right| &\leq \left| |a+b|^q - |a|^q\right| + |b|^q \\
                                            &\leq |b| \sup_{|z| \leq 2|a|} |F'(z)| + |b|^q \\
                                            &= q2^{q-1}|b||a|^{q-1} + |b|^q \\
                                            &\lesssim \max\{|a||b|^{q-1}, |a|^{q-1}|b|\}.
    \end{align*}
	
    Now take any $a_1, \dots, a_n \in \C$. We can expand telescopically and apply the base case to
    show that
    \begin{align*}
      \left| |\sum_{j=1}^n a_j|^q - \sum_{j=1}^n |a_j|^q\right| &= \left| \sum_{k=2}^n \left(|\sum_{j=1}^k a_j|^q - |\sum_{j=1}^{k-1} a_j|^q - |a_k|^q\right) \right| \\
                                                                &\leq \sum_{k=2}^n \left||\sum_{j=1}^k a_j|^q - |\sum_{j=1}^{k-1} a_j|^q - |a_k|^q \right| \\
                                                                &\lesssim \sup_{2 \leq k \leq n} \max\left\{|\sum_{j=1}^{k-1}a_j|^{q-1}|a_k|, |\sum_{j=1}^{k-1}a_j||a_k|^{q-1}\right\}.
    \end{align*}
    By the triangle inequality and the pseudo-triangle inequality
    $|a+b|^{q-1} \lesssim |a|^{q-1} + |b|^{q-1}$, we obtain the result.
\end{proof}

\begin{proposition}\label{special double decoupling}
    Let $\{(f_n,g_n)\} \subset L^2 \times L^2$ be bounded. Then there exist
    $J^* \in \N \cup \{\infty\}$ and decompositions
    \begin{equation*}
        f_n = \sum_{j=1}^J S_n^j\phi^j + r_n^J \quad \text{and} \quad g_n = \sum_{j=1}^J R_n^j\psi^j + w_n^J
    \end{equation*}
    for $J \leq J^*$ that satisfy all the conclusions in Proposition \ref{profile decomp} and such
    that there exist partitions $A_J \cup B_J = \{1,\dots,J\}$ such that
    \begin{equation}\label{refl ext ortho}
        \begin{split}
            \lim_{J\rightarrow J^*} \limsup_{n\rightarrow \infty} &\|\mathcal{E}f_n + \mathcal{E}_-g_n\|_q^q - \sum_{j\in A_J} \|\mathcal{E}S_n^j\phi^j + \mathcal{E}_-R_n^j\psi^j\|_q^q \\
            &- \sum_{j\in B_J} \|\mathcal{E}\phi^j\|_q^q - \sum_{j\in
              B_J}\|\mathcal{E}_-\psi^j\|_q^q - \|\mathcal{E}r_n^J\|_q^q - \|\mathcal{E}_-
            w_n^J\|_q^q = 0.
        \end{split}
    \end{equation}
    Furthermore, for $J_1 \leq J_2 \leq J^*$ we have $A_{J_1} \subset A_{J_2}$.
\end{proposition}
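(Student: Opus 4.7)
The plan is to apply Proposition \ref{profile decomp} separately to $\{f_n\}$ and $\{g_n\}$, producing profiles $\{S_n^j\phi^j\}_j$ and $\{R_n^k\psi^k\}_k$ with remainders $r_n^J, w_n^J$. By Lemma \ref{one or the other}, each $\{S_n^j\}$ fails to be asymptotically orthogonal to at most one $\{R_n^k\}$, which defines an injective partial matching between the two index sets. I would reindex both decompositions so that matched indices coincide, padding the shorter enumeration with zero profiles; the resulting decompositions still satisfy the conclusions of Proposition \ref{profile decomp} because reindexing preserves the mutual asymptotic orthogonality within each family and the remainders are unchanged. Set $A_J := \{j \in \{1,\dots,J\} : \{S_n^j\} \not\perp \{R_n^j\}\}$ and $B_J := \{1,\dots,J\}\setminus A_J$; since membership depends only on $j$ and not on $J$, the monotonicity $A_{J_1} \subset A_{J_2}$ is automatic.

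To establish \eqref{refl ext ortho}, write the integrand of $\|\mathcal{E}f_n + \mathcal{E}_- g_n\|_q^q$ as $|\sum_{j\in A_J}(\mathcal{E}S_n^j\phi^j + \mathcal{E}_- R_n^j\psi^j) + \sum_{j\in B_J}\mathcal{E}S_n^j\phi^j + \sum_{j\in B_J}\mathcal{E}_- R_n^j\psi^j + \mathcal{E}r_n^J + \mathcal{E}_- w_n^J|^q$ and apply Lemma \ref{calc inequality} with this finite list of summands. The difference between the $q$-th power of the sum and the sum of the $q$-th powers of the summands is then pointwise controlled by a supremum of cross terms $|a||b|^{q-1}$ over pairs of distinct summands. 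After integrating, the proof reduces to showing every such cross integral vanishes in the iterated limit $\lim_{J\to J^*}\limsup_{n\to\infty}$. Once these vanish, the paired contributions $\|\mathcal{E}S_n^j\phi^j + \mathcal{E}_- R_n^j\psi^j\|_q^q$ for $j \in A_J$, the standalone contributions $\|\mathcal{E}\phi^j\|_q^q$ and $\|\mathcal{E}_-\psi^j\|_q^q$ for $j \in B_J$ (using $\mathbf{T}_+$- and $\mathbf{T}_-$-invariance), and the remainder contributions $\|\mathcal{E}r_n^J\|_q^q$ and $\|\mathcal{E}_- w_n^J\|_q^q$ appear exactly as in \eqref{refl ext ortho}.

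The cross integrals between two distinct bubbles fall into three flavors: two $\mathcal{E}$-bubbles with $j \neq j'$ (vanishing by Lemma \ref{same parab decoupling} since the profile decomposition guarantees $\{S_n^j\} \perp \{S_n^{j'}\}$), two $\mathcal{E}_-$-bubbles (analogous), and mixed $\mathcal{E},\mathcal{E}_-$ bubbles with $j \neq j'$. The mixed case requires $\{S_n^j\} \perp \{R_n^{j'}\}$, which is exactly where the matching construction pays off: if $j \in A_J$ is matched at index $j$, Lemma \ref{one or the other} forces $\{S_n^j\} \perp \{R_n^{j'}\}$ for every $j' \neq j$, and if $j \in B_J$ then $\{S_n^j\} \perp \{R_n^{j'}\}$ for every $j'$. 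Lemma \ref{diff parab decoupling} then dispatches these. The main obstacle is the cross terms involving the remainders, which are not of bubble form. These I would bound by H\"older, e.g.\ $\int |\mathcal{E}S_n^j\phi^j||\mathcal{E}r_n^J|^{q-1} \leq \|\mathcal{E}\phi^j\|_q\|\mathcal{E}r_n^J\|_q^{q-1}$, using symmetry invariance for the first factor and the extinction property \eqref{remainder ext} to kill the second in the iterated limit; the mixed remainder term $\int |\mathcal{E}r_n^J||\mathcal{E}_- w_n^J|^{q-1}$ is handled similarly. Summing the resulting error bounds over the finitely many cross pairs and sending first $n\to\infty$, then $J\to J^*$, yields \eqref{refl ext ortho}.
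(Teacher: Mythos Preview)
Your proposal is correct and follows essentially the same approach as the paper: apply the profile decomposition to each sequence, use Lemma \ref{one or the other} to match and reindex so that non-orthogonal pairs share an index, then apply Lemma \ref{calc inequality} and dispatch the bubble--bubble cross terms with Lemmas \ref{same parab decoupling} and \ref{diff parab decoupling} while handling the remainder cross terms by H\"older together with \eqref{remainder ext}. Your write-up is in fact somewhat more explicit than the paper's about the case analysis for the cross terms and about why the monotonicity $A_{J_1}\subset A_{J_2}$ holds.
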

\begin{proof}
    Let $J < J^*$ and $j \leq J$. By Lemma \ref{one or the other}, there exists at most one
    $j' \leq J$ such that $\{S_n^j\}\not\perp \{R_n^{j'}\}$. If such a $j'$ exists we may assume
    that $j = j'$ by rearranging the sequence $\{\psi^j\}_1^J$ and adding $j$ to the set $A_J$. If
    no such $j'$ exists, we add $j$ to $B_J$. By the triangle inequality,
    \begin{multline}\label{dumb ineq 1}
        \Big|\Big.\|\mathcal{E}f_n + \mathcal{E}_-g_n\|_q^q - \sum_{j\in A_J} \|\mathcal{E}S_n^j\phi^j + \mathcal{E}_-R_n^j\psi^j\|_q^q \\
        - \sum_{j\in B_J} \|\mathcal{E}\phi^j\|_q^q - \sum_{j\in B_J}\|\mathcal{E}_-\psi^j\|_q^q - \|\mathcal{E}r_n^J\|_q^q - \|\mathcal{E}_- w_n^J\|_q^q\Big|\Big. \\
        \leq \int \Big|\Big. |\mathcal{E}w_n^J + \mathcal{E}_-r_n^J + \sum_{j=1}^J \mathcal{E}S_n^j\phi^j + \mathcal{E}_-R_n^j\psi^j|^q - \sum_{j \in A_J} |\mathcal{E}S_n^j\phi^j + \mathcal{E}_- R_n^j\psi^j|^q \\
        - \sum_{j \not\in A_J} \left(|\mathcal{E}S_n^j \phi^j|^q + |\mathcal{E}_- R_n^j
            \psi^j|^q\right) - |\mathcal{E}w_n^J|^q - |\mathcal{E}_- r_n^J|^q \Big|\Big. .
    \end{multline}
	
    Thanks to Lemma \ref{calc inequality}, Lemmas \ref{same parab decoupling} and \ref{diff parab
      decoupling}, the quasi-triangle inequality, H\"older, and the fact that $\mathcal{E}$ is
    bounded, we continue the estimate of \eqref{dumb ineq 1} with
    \begin{align*}
      \lim_{n\rightarrow \infty} \text{RHS} &\lesssim \lim_{n\rightarrow\infty} \sup_{j,j' \leq J, \;\epsilon_1 \neq \epsilon_2 \in \{1,q-1\}} \int |\mathcal{E}S_n^j\phi^j + \mathcal{E}_-R_n^{j'}\psi^{j'}|^{\epsilon_1}|\mathcal{E}w_n^J + \mathcal{E}_-r_n^J|^{\epsilon_2} \\
                                            &\lesssim \lim_{n\rightarrow \infty} \sup_{j, j' \leq J, \; \epsilon_1 \neq \epsilon_2 \in \{1,q-1\}} \|\mathcal{E}S_n^j\phi^j + \mathcal{E}_-R_n^j\psi^j\|_q^{\epsilon_1}\|\mathcal{E}w_n^J + \mathcal{E}_- r_n^J\|_q^{\epsilon_2} \\
                                            &\lesssim \lim_{n\rightarrow\infty} \|\mathcal{E}w_n^J\|_q + \|\mathcal{E} w_n^J\|_q^{q-1} + \|\mathcal{E}_- r_n^J\|_q + \|\mathcal{E}_- r_n^J\|_q^{q-1}.
    \end{align*}
    By \eqref{remainder ext}, the claim is proved.
\end{proof}

\section{Existence of Extremizers}

Now we use the profile decomposition and information about the operator norm to find a pair of
bubbles that accounts for the full $L^2$ mass in the limit.

\begin{proof}[Proof of Theorem \ref{thm 2} part 1]
    Assume that
    \[
        \left(\frac{1}{\sqrt{\pi}} \cdot
            \frac{\Gamma(\frac{q+1}{2})}{\Gamma(\frac{q+2}{2})}\right)^{1/q} 2^{1/p'} A_p < A_p^\pm
    \]
    as in the statement of the theorem.
        
    Let $\{(f_n,g_n)\} \in \ell^2(L^2)$ be such that
    \begin{equation*}
        \lim_{n\rightarrow \infty} \|\mathcal{E}f_n + \mathcal{E}_- g_n\|_q = A_2^\pm \quad \text{and} \quad \|f_n\|_2^2 + \|g_n\|_2^2 = 1
    \end{equation*}
    for all $n$. Using Proposition \ref{special double decoupling}, write
    \begin{equation*}
        f_n = \sum_{j=1}^J S_n^j \phi^j + r_n^J \quad \text{and} \quad g_n = \sum_{j=1}^J R_n^j \psi^j + w_n^J
    \end{equation*}
    for $J < J^*$. Let $\varepsilon > 0$ and let $J$ and $N$ be sufficiently large dependent on
    $\varepsilon$. Then by applying \eqref{ext ortho}, H\"older, and \eqref{profile ortho},
    \begin{equation*}
        \begin{split}
            (A_2^\pm)^q - \varepsilon &\leq \sum_{j\in A_J} \|\mathcal{E}S_n^j\phi^j + \mathcal{E}_- R_n^j\psi^j\|_q^q + \sum_{j\in B_J} (\|\mathcal{E}\phi^j\|_q^q + \|\mathcal{E}_- \psi^j\|_q^q) \\
            &\quad+ \|\mathcal{E}w_n^J\|_q^q + \|\mathcal{E}_- r_n^J\|_q^q \\
            &\leq \sup_{k \in A_J} \|\mathcal{E}S_n^k\phi^k + \mathcal{E}_- R_n^k\psi^k\|_q^{4/d} \sum_{j\in A_J} \|\mathcal{E}S_n^j\phi^j + \mathcal{E}_- R_n^j\psi^j\|_q^2\\
            &\quad + \sup_{k \in B_J} \max \{\|\mathcal{E}\phi^k\|_q^{4/d}, \|\mathcal{E}_- \psi^k\|_q^{4/d}\}\sum_{j\in B_J} (\|\mathcal{E}\phi^j\|_q^2 + \|\mathcal{E}_- \psi^j\|_q^2) + O(\varepsilon) \\
            &\leq (A_2^\pm)^2 \sup_{k \in A_J} \|\mathcal{E}S_n^k\phi^k + \mathcal{E}_- R_n^k\psi^k\|_q^{4/d} \sum_{k \in A_J} (\|\phi^j\|_2^2 + \|\psi^j\|_2^2) \\
            &\quad + A_2^2 \sup_{k \in B_J} \max \{\|\mathcal{E}\phi^k\|_q^{4/d}, \|\mathcal{E}_- \psi^k\|_q^{4/d}\}\sum_{j\in B_J} (\|\phi^j\|_2^2 + \|\psi^j\|_2^2) + O(\varepsilon)\\
            &\leq \max \left\{(A_2^\pm)^2 \|\mathcal{E}S_n^{j_0}\phi^{j_0} + \mathcal{E}_- R_n^{j_0}\psi^{j_0}\|_q^{4/d}, A_2^2 \|\mathcal{E}\phi^{j_1}\|_q^{4/d}, A_2^2 \|\mathcal{E}_-\psi^{j_2}\|_q^2\right\} \\
            &\quad + O(\varepsilon),
        \end{split}
    \end{equation*}
    for $j_0 \in A_J$ and $j_1,j_2 \in B_J$, chosen based on the supremum, and all $n > N$. By the
    second inequality in Theorem \ref{thm 1} part 1, there exists a $C > 0$ such that
    $A_2^q + C < (A_2^\pm)^q$. Since $\|\phi^{j_1}\|_2^2 \leq 1$ by \eqref{profile ortho},
    \[
        A_2^2\|\mathcal{E}\phi^{j_1}\|_q^{4/d} \leq A_2^q \|\phi^{j_1}\|_2^{4/d} < (A_2^\pm)^q - C.
    \]
    Applying the same logic to $\mathcal{E}_-\psi^{j_2}$ and taking $\varepsilon$ sufficiently
    small, this proves that the maximum is achieved by the first term and hence
    \begin{equation}\label{pair is ext}
        A_2^\pm - O(\varepsilon) \leq \|\mathcal{E}S_n^{j_0}\phi^{j_0} + \mathcal{E}_-R_n^{j_0}\psi^{j_0}\|_q.
    \end{equation}
    Since
    \[
        \|\mathcal{E}S_n^{j_0}\phi^{j_0} + \mathcal{E}_-R_n^{j_0}\psi^{j_0}\|_q \leq A_2^\pm
        (\|\phi^{j_0}\|_2^2 + \|\psi^{j_0}\|_2^2)^{1/2},
    \]
    we have $\|\phi^{j_0}\|_2^2 + \|\psi^{j_0}\|_2^2 = 1$ by taking $\varepsilon \rightarrow 0$. By
    \eqref{profile ortho},
    \begin{equation}\label{first convergence}
        \|f_n - S_n^{j_0}\phi^{j_0}\|_2 \rightarrow 0 \quad \text{and} \quad \|g_n - R_n^{j_0}\psi^{j_0}\|_2 \rightarrow 0
    \end{equation}
	
    Let $\{(\lambda_n, t_n, x_n, \xi_n)\}$ and $\{(\kappa_n, s_n, y_n, \eta_n)\}$ be the parameters
    for $\{S_n^{j_0}\}\subset \mathbf{S}_+$ and $\{R_n^{j_0}\} \subset \mathbf{S}_-$ respectively
    and let $\{T_n^{j_0}\} \subset \mathbf{T}_+$ and $\{U_n^{j_0}\} \subset \mathbf{T}_-$ be the
    associated $L^q$ symmetries. Let $r_n = \frac{\kappa_n}{\lambda_n}$ and
    \begin{equation}\label{theta def}
        \theta_n := -r_n^2 |\xi_n|^2s_n - r_ny_n \cdot \xi_n - 2r_ns_n\eta_n\cdot\xi_n +
        s_n|\eta_n|^2 + y_n\cdot\eta_n.
    \end{equation}
    By calculation,
    \begin{multline*}
        (U_n^{j_0})^{-1}T_n^{j_0}F(t,x) = r_n^\frac{d+2}{q}e^{i(r_n^2 t, r_nx)\cdot(|\xi_n + r_n^{-1}\eta_n|^2, \xi_n + r_n^{-1}\eta_n) - 2i(t,x)\cdot(|\eta_n|^2, \eta_n) + i\theta_n} \\
        F(r_n^2t + t_n - r_n^2 s_n, r_nx + x_n - r_n y_n + 2r_n^2(\xi_n + r_n^{-1}\eta_n)(t-s_n)).
    \end{multline*}
    Since $j_0 \in A_J$, Definition \ref{sym decouple condition} implies that there exist
    $r_0 \in (0,\infty)$, $(t_0, x_0) \in \R^{d+1}$, $\xi_0 \in \R^d$, $\theta_0 \in [0,2\pi)$, and
    a subsequence in $n$ along which
    \begin{enumerate}
        \item $\lim r_n = r_0$,
        \item $\lim \xi_n + r_n^{-1}\eta_n = \xi_0$,
        \item
        $\lim (t_n - r_n^2s_n, x_n - r_ny_n - 2r_n^2 s_n(\xi_n + r_n^{-1}\eta_n)) = (t_0, x_0)$, and
        \item $\lim e^{i\theta_n} = e^{i\theta_0}$.
    \end{enumerate}
    Let $V^{j_0} \in \mathbf{T}_+$ be the symmetry associated with the parameters $r_n^{-1}$,
    $\xi_0$, and $(t_0, x_0)$, and let $W^{j_0} \in \mathbf{S}_+$ be the corresponding $L^2$
    isometry. Assume $F, G \in C^\infty_{cpct}$. Then by the dominated convergence theorem,
    \[
        \lim \|(U_n^{j_0})^{-1}T_n^{j_0}F(t,x) - e^{-2i(t,x)\cdot(|\eta_n|^2, \eta_n)}
        e^{i\theta_0}V^{j_0}F(t,x)\|_q = 0.
    \]
    By density, we can extend this to $F, G \in L^q$.
	
    Let $\Phi := \mathcal{E}\phi^{j_0}$ and $\Psi := \mathcal{E}_- \psi^{j_0}$. By the triangle
    inequality,
    \[
        \lim \|T_n^{j_0}\Phi + U_n^{j_0}\Psi\|_q - \|e^{-2i(t,x)\cdot(|\eta_n|^2, \eta_n)}
        e^{i\theta_0}V^{j_0}\Phi + \Psi\|_q = 0.
    \]
    If $|\eta_n| \rightarrow \infty$,
    \[
        \lim \|e^{-2i(t,x)\cdot(|\eta_n|^2, \eta_n)} e^{i\theta_0}V^{j_0}\Phi + \Psi\|_q^q =
        \frac{1}{2\pi}\int_0^{2\pi} \int \left|e^{i\omega}|V^{j_0}\Phi| + |\Psi|\right|^qdtdxd\omega
    \]
    by Lemma \ref{lambda lemma}. However, by \cite[Lemma 6.1]{FLS}
    \[
        \frac{1}{2\pi}\int_0^{2\pi} \int \left|e^{i\omega}|V^{j_0}\Phi| + |\Psi|\right|^qdtdxd\omega
        \leq \frac{2^{q/2}}{\sqrt{\pi}} \cdot
        \frac{\Gamma(\frac{q+1}{2})}{\Gamma(\frac{q+2}{2})}\left(\|\Phi\|_q^2 +
            \|\Psi\|_q^2\right)^{q/2}.
    \]
    This contradicts the hypothesis on $A_2^\pm$ and \eqref{pair is ext} by taking $\varepsilon$
    sufficiently small. Therefore $|\eta_n| \not\rightarrow \infty$ and there exists
    $\eta_0 \in \R^d$ such that $\eta_n \rightarrow \eta_0$ along a subsequence.
	
    Let $\{K_n\} \subset \mathbf{S}_+ \cap \mathbf{S}_-$ be the symmetries associated with the
    parameters $\{(\kappa_n, s_n, y_n + 2s_n\eta_n, 0)\}$. Let
    $\{L_n\} \subset \mathbf{T}_+ \cap \mathbf{T}_-$ be the associated $L^q$ (note that the
    definitions of the $L^q$ symmetries coincide when the frequency translation is zero so the $L_n$
    are well-defined). Let $r_n = \frac{\kappa_n}{\lambda_n}$,
    \[
        \theta_n := -r_n^2 s_n |\xi_n|^2 - r_ny_n\cdot\xi_n - 2r_ns_n\eta_n\cdot\xi_n,
    \]
    and
    \[
        \omega_n := -s_n|\eta_n|^2 - y_n\cdot\eta_n.
    \]
    We calculate
    \begin{multline*}
        L_n^{-1}T_n^{j_0}\Phi(t,x) + L_n^{-1}U_n^{j_0}\Psi(t,x) \\
        = r_n^\frac{d+2}{q} e^{i\theta_n} e^{i(r_n^2t, r_n x)\cdot(|\xi_n|^2, \xi_n)} \\
        \Phi(r_n^2t + t_n - r_n^2 s_n, r_nx + 2r_n^2t\xi_n + (2r_n^2s_n(r_n^{-1}\eta_n - \xi_n) + x_n - r_ny_n)) \\
        + e^{i\omega_n} e^{i(t,x)\cdot(-|\eta_n|^2, \eta_n)}\Psi(t,x - 2t\eta_n).
    \end{multline*}
    By Definition \ref{sym decouple condition} and the fact that $\lim \eta_n = \eta_0$, there exist
    $r_0 \in (0,\infty)$, $(t_0, x_0) \in \R^{d+1}$, $\xi_0 \in \R^d$, $\theta_0 \in [0, 2\pi)$,
    $\omega_0 \in [0, 2\pi)$, and a subsequence in $n$ along which
    \begin{enumerate}
        \item $\lim r_n = r_0$,
        \item $\lim \xi_n = \xi_0$,
        \item
        $\lim (t_n - r_n^2s_n, x_n - r_n y_n + 2r_n^2s_n(r_n^{-1}\eta_n - \xi_n)) = (t_0, x_0)$,
        \item $\lim e^{i\theta_n} = e^{i\theta_0}$, and
        \item $\lim e^{i\omega_n} = e^{i\omega_0}$.
    \end{enumerate}
    Let $(W^{j_0}, Y^{j_0}) \in \mathbf{S}_+ \times \mathbf{T}_+$ be the symmetry pair associated
    with the parameters $(r_0^{-1}, t_0,x_0, \xi_0)$ and let
    $(X^{j_0}, Z^{j_0}) \in \mathbf{S}_- \times \mathbf{T}_-$ be associated with the parameters
    $(1, 0,0, \eta_0)$. By approximating $\phi^{j_0}$ and $\psi^{j_0}$ in $C^\infty_{cpct}$ and
    applying dominated convergence, the convergence in parameters implies that
    \[
        K_n^{-1}S_n^{j_0}\phi^{j_0} \rightarrow e^{i\theta_0}W^{j_0}\phi^{j_0} \quad \text{and}
        \quad K_n^{-1}R_n^{j_0} \phi^{j_0} \rightarrow e^{i\omega_0}X^{j_0}\psi^{j_0}.
    \]
    Let $f = e^{i\theta_0}W^{j_0}\phi^{j_0}$ and $g = e^{i\omega_0}X^{j_0}\psi^{j_0}$. By
    \eqref{first convergence},
    \[
        K_n^{-1} f_n \rightarrow f \quad \text{and} \quad K_n^{-1} g_n \rightarrow g,
    \]
    in $L^2$. Finally, by the continuity of $\mathcal{E}_\pm$ from $\ell^2(L^2)$ to $L^q$,
    \[
        \|\mathcal{E}f + \mathcal{E}_- g\|_q = A_2^\pm.
    \]
\end{proof}

\begin{proof}[Proof of Theorem \ref{thm 2} part 2]
    If
    \[
        \left(\frac{1}{\sqrt{\pi}} \cdot
            \frac{\Gamma(\frac{q+1}{2})}{\Gamma(\frac{q+2}{2})}\right)^{1/q} 2^{1/p'} A_2 < A_2^\pm,
    \]
    then the previous part proves that extremizers exist. Therefore, assume
    \[
        \left(\frac{1}{\sqrt{\pi}} \cdot
            \frac{\Gamma(\frac{q+1}{2})}{\Gamma(\frac{q+2}{2})}\right)^{1/q} 2^{1/p'} A_2 = A_2^\pm.
    \]

    Let $f \in L^2$ be such that $\|f\|_2 = 1$ and $\|\mathcal{E}f\|_q = A_2$ (\cite{Stovall}). Let
    $g_\theta(\xi) = e^{i\theta}\overline{f}(-\xi)$. Then by the identity
    $\mathcal{E}_- h = \overline{\mathcal{E}\widetilde{h}}$ and \cite[Lemma 6.1]{FLS},
    \begin{align*}
      \frac{1}{2\pi} \int_0^{2\pi} \int |\mathcal{E}f + \mathcal{E}_- g_\theta|^q dtdxd\theta
      &= \frac{1}{2\pi} \int_0^{2\pi} \int |\mathcal{E}f + e^{-i\theta}\overline{\mathcal{E}f}|^q dtdxd\theta \\
      &= \frac{2^q}{\sqrt{\pi}} \cdot \frac{\Gamma(\frac{q+1}{2})}{\Gamma(\frac{q+2}{2})} \|\mathcal{E}f\|_q^q \\
      &= \frac{2^q}{\sqrt{\pi}} \cdot \frac{\Gamma(\frac{q+1}{2})}{\Gamma(\frac{q+2}{2})} A_2^q.
    \end{align*}
    Since $\int |\mathcal{E}f + \mathcal{E}_- g_\theta|^q$ is continuous in $\theta$ by dominated
    convergence, there exists a $\theta_0 \in [0, 2\pi)$ such that
    \[
        \|\mathcal{E}f + \mathcal{E}_-g_{\theta_0}\|_q = 2 \left(\frac{1}{\sqrt{\pi}} \cdot
            \frac{\Gamma(\frac{q+1}{2})}{\Gamma(\frac{q+2}{2})}\right)^{1/q} A_2.
    \]
    Therefore, by the assumption on $A_2^\pm$,
    \[
        \frac{\|\mathcal{E}f + \mathcal{E}_-g_{\theta_0}\|_q}{(\|f\|_2^2 +
          \|g_{\theta_0}\|_2^2)^{1/2}} = A_2^\pm.
    \]
\end{proof}

\section{Validated numerics}

\begin{proof}[Proof of Theorem \ref{thm 1} part 2]
    Let $f(\xi) = e^{-|\xi|^2}$. By \cite[Theorem 1.1]{Foschi 2007},
    $\|\mathcal{E}f\|_q = A_2\|f\|_2$. By the identity
    $\mathcal{E}_- f = \overline{\mathcal{E}\widetilde{f}}$ and \cite[Lemma 6.1]{FLS},
    \begin{align*}
      \frac{1}{2\pi} \int_0^{2\pi} \int \left|\Re e^{i\theta/2}\mathcal{E}f\right|^q dtdxd\theta &= \frac{1}{2\pi} \int_0^{2\pi} \int \left| e^{-i\theta/2}\mathcal{E}f + e^{i\theta/2}\mathcal{E}_- f\right|^q dtdxd\theta \\
                                                                                                 &= \frac{2^q}{\sqrt{\pi}} \cdot \frac{\Gamma(\frac{q+1}{2})}{\Gamma(\frac{q+2}{2})}A_2^q.
    \end{align*}
    Let $\mathcal{J}(\theta) = \int \left|\Re e^{i\theta} \mathcal{E}f\right|^q dtdx$. If we can
    show that $\mathcal{J}(\theta_1) \neq \mathcal{J}(\theta_2)$ for some $\theta_1 \neq \theta_2$,
    then by the continuity of $\mathcal{J}$ there exists a $\theta_0$ such that
    \begin{equation*}
        \mathcal{J}(\theta_0) > \frac{2^q}{\sqrt{\pi}} \cdot \frac{\Gamma(\frac{q+1}{2})}{\Gamma(\frac{q+2}{2})}A_2^q,
    \end{equation*}
    which will complete the proof.
	
    Expanding $\mathcal{E}f$ and integrating in polar coordinates, we see that
    \begin{align*}
      \mathcal{J}(\theta) &= \pi^{dq/2} \int (1+t^2)^{-dq/2} e^{\frac{-q|x|^2}{4(1+t^2)}} \left|\cos \left(\theta + \frac{t|x|^2}{4(1+t^2)}\right) \right|^q dtdx \\
                          &= c_d \int_{-\infty}^\infty \int_0^\infty (1+t^2)^\frac{d(1-q)}{2} r^{d-1} e^{-qr^2} \left| \cos \left( \theta + \frac{tr^2}{4}\right)\right|^q drdt
    \end{align*}
    where $c_d = \frac{2\pi^{d(1+q)/2}}{\Gamma(d/2)}$.
	
    We will consider $d=1$ as the case $d=2$ follows \textit{m.m.} Bounding $\cos$ by one and using
    Sage (\cite{sagemath}), we can bound the tail error
    \begin{equation*}
        c_d\int_{|t|>49} \int_4^\infty (1+t^2)^\frac{d(1-q)}{2} r^{d-1} e^{-qr^2} \left| \cos \left( \theta + \frac{tr^2}{4}\right)\right|^q drdt < 10^{-19}.
    \end{equation*}
    The techniques of interval arithmetic (e.g.\ \cite[Chapter 3]{Moore}, \texttt{RIF} Sage
    datatype) allow us to track potential computation errors in the upper and lower Riemann sums
    with steps of $0.1$. Sage provides
    \begin{equation*}
        c_d\int_{|t| < 50} \int_0^5 (1+t^2)^\frac{d(1-q)}{2} r^{d-1} e^{-qr^2} \left| \cos \left(\frac{tr^2}{4}\right)\right|^q drdt \in (23, 37)
    \end{equation*}
    and
    \begin{equation*}
        c_d\int_{|t| < 50} \int_0^5 (1+t^2)^\frac{d(1-q)}{2} r^{d-1} e^{-qr^2} \left| \cos \left( \frac{\pi}{2} + \frac{tr^2}{4}\right)\right|^q drdt \in (0, 0.1).
    \end{equation*}
    These intervals are further apart than twice the tail error, so
    $\mathcal{J}(0) \neq \mathcal{J}(\pi/2)$.
\end{proof}

\begin{bibdiv}
    \begin{biblist}
        \bib{Allaire}{article}{ title={Homogenization and two-scale convergence}, author={Allaire,
            G.}, journal={SIAM J. Math. Anal.}, volume={23}, date={1992}, pages={1482\ndash 1518} }
        \bib{BV}{article}{title={Mass concentration phenomena for the $L^2$-critical nonlinear
            Schr\"odinger equation}, author={B\'egout, P.}, author={Vargas, A.},
          journal={Trans. Amer. Math. Soc.}, volume={359}, number={11}, date={2007},
          pages={5257\ndash 5282} } \bib{BBCH}{article}{ title={Heat-flow monotonicity of Strichartz
            norms}, author={Bennett, J.}, author={Bez, N.}, author={Carbery, A.},
          author={Hundertmark, D.}, journal={Anal. PDE.}, volume={2}, number={2}, date={2009},
          pages={147\ndash 158} } \bib{Bourgain}{article}{title={Refinements of Strichartz'
            inequality and applications to 2D-NLS with critical nonlinearity}, author={Bourgain,
            J.}, journal={Int. Math. Res. Not.}, volume={5}, date={1998}, pages={253\ndash 283} }
        \bib{CK}{article}{title={On the role of quadratic oscillations in nonlinear Schr\"odinger
            equations. II. The $L^2$-critical case.}, author={Carles, R.}, author={Keraani, S.},
          journal={Trans. Amer. Math. Soc.}, volume={359}, number={1}, date={2007}, pages={33\ndash
            62} } \bib{CO 2015}{article}{title={Some sharp restriction inequalities on the sphere},
          journal={Int. Math. Res. Not.}, author={Carneiro, E.}, author={Oliveira e Silva, D.},
          date={2015}, number={17}, pages={8233\ndash 8267} } \bib{Christ Quilodran}{article}{
          title={Gaussians rarely extremize adjoint Fourier restriction inequalities for
            paraboloids}, volume={142}, number={3}, journal={Proc. Amer. Math. Soc.},
          author={Christ, M.}, author={Quilodrán, R.}, year={2013}, pages={887–896} } \bib{Christ
          Shao 1}{article}{title={Existence of extremals for a Fourier restriction inequality},
          journal={Anal. PDE.}, author={Christ, M.}, author={Shao, S.}, volume={5}, number={2},
          date={2012}, pages={261\ndash 312} } \bib{Christ Shao 2}{article}{title={On the
            extremizers of an adjoint Fourier restriction inequality}, journal={Adv. Math.},
          author={Christ, M.}, author={Shao, S.}, volume={230}, date={2012}, number={3},
          pages={957\ndash 977} } \bib{Foschi 2015}{article}{title={Global maximizers for the sphere
            adjoint Fourier restriction inequality}, journal={J. Funct. Anal.}, author={Foschi, D.},
          volume={268}, date={2015}, number={3}, pages={690\ndash 702} } \bib{Foschi
          2007}{article}{title={Maximizers for the Strichartz inequality}, author={Foschi, D.},
          journal={J. Eur. Math. Soc.}, volume={9}, date={2007}, number={4}, pages={739\ndash 774} }
        \bib{Foschi e Silva 2017}{article}{ title={Some recent progress on sharp Fourier restriction
            theory}, volume={43}, number={2}, journal={Analysis Mathematica}, author={Foschi, D.},
          author={Oliveira e Silva, D.}, year={2017}, pages={241–265} } \bib{FLS}{article}{
          title={Maximizers for the Stein-Tomas inequality}, author={Frank, R.}, author={Lieb,
            E. H.}, author={Sabin, J.}  journal={Geom. Funct. Anal.}, volume={26}, number={4},
          pages={1095\ndash 1134}, date={2016} } \bib{HZ}{article}{title={On sharp Strichartz
            inequalities in low dimensions}, author={Hundertmark, D.}, author={Zharnitsky, V.},
          journal={Int. Math. Res. Not.}, date={2006}, pages={34080} } \bib{KV}{article}{
          title={Nonlinear Schr\"odinger equations at critical regularity}, author={Killip, R.},
          author={Visan, M.}, book={ title={Evolution Equations}, series={Clay Math. Proc.},
            volume={17}, date={2013}, address={Providence, RI} }, pages={325\ndash 437} }
        \bib{MV}{article}{title={Compactness at blow-up time for $L^2$ solutions of the critical
            non-linear Schr\"odinger equation in 2D}, author={Merle, F.}, author={Vega, L.},
          journal={Int. Math. Res. Not.}, volume={8}, date={1998}, pages={399\ndash 425} }
        \bib{Moore}{book}{ title={Methods and Applications of Interval Analysis}, author={Moore,
            R. E.}, date={1979}, publisher={SIAM}, address={Philadelphia, PA} } \bib{OQ
          2021}{article}{title={Global maximizers for adjoint Fourier restriction inequalities on
            low dimensional spheres}, journal={J. Funct. Anal.}, author={Oliveira e Silva, D.},
          author={Quilodr\'an, R.}, date={2021}, volume={280}, number={7}, pages={108825} }
        \bib{sagemath}{manual}{ author={Developers, The~Sage}, title={{S}agemath, the {S}age
            {M}athematics {S}oftware {S}ystem ({V}ersion 9.4)}, date={2021}, note={{\tt
              https://www.sagemath.org}}, } \bib{Shao 2009}{article}{ title={Maximizers for the
            Strichartz inequalities and the Sobolev-Strichartz inequalities for the Schr\"odinger
            equation}, author={Shao, S.}, journal={Electron. J. Differential Equations}, number={3},
          pages={13}, date={2009} } \bib{Shao 2016}{article}{title={On existence of extremizers for
            the Tomas-Stein inequality for $S^1$}, journal={J. Funct. Anal.}, author={Shao, S.},
          volume={270}, date={2016}, number={10}, pages={3996 \ndash 4038} } \bib{Stovall}{article}{
          title={Extremizability of Fourier restriction to the paraboloid}, author={B. Stovall},
          status={Preprint}, eprint={arXiv:1804.03605} } \bib{Tao}{article}{title={A sharp bilinear
            restriction estimate for paraboloids}, author={Tao, T.}, journal={Geom. \& Func. Anal.},
          date={2002}, volume={13}, pages={1359\ndash 1384} }
    \end{biblist}
\end{bibdiv}
	
\end{document}